\spnewtheorem{assumption}{Assumption}{\bf}{\it}
\DeclareMathOperator*{\argmin}{argmin}
\begin{document}

\title{Lifted Stationary Points of Sparse Optimization with Complementarity Constraints 
}


\author{Shisen Liu         \and
        Xiaojun Chen 
}


\institute{
Shisen Liu \at
              Department of Applied Mathematics, The Hong Kong Polytechnic University, Hong Kong, P.R. China,               \email{shisen.liu@connect.polyu.hk}           
           \and
           Xiaojun Chen \at
              Department of Applied Mathematics, The Hong Kong Polytechnic University, Hong Kong, P.R. China,
              \email{maxjchen@polyu.edu.hk}\\
}

\date{9 December 2022}

\maketitle

\begin{abstract}
We aim to compute lifted stationary points of a sparse optimization problem (\ref{P0}) with complementarity constraints. 
We define a continuous relaxation problem (\ref{Rv}) that has the same global minimizers and optimal value with problem (\ref{P0}). Problem (\ref{Rv}) is a mathematical program with complementarity constraints (MPCC) and a difference-of-convex (DC) objective function. We define MPCC lifted-stationarity of (\ref{Rv}) and show that it is weaker than directional stationarity, but stronger than Clarke stationarity for local optimality. Moreover, we propose an approximation method to solve (\ref{Rv}) and an augmented Lagrangian method to solve its subproblem (\ref{Rvsig}), which relaxes the equality constraint in (\ref{Rv}) with a tolerance $\sigma$.  We prove the convergence of our algorithm to an MPCC lifted-stationary point of problem (\ref{Rv}) and use a sparse optimization problem with vertical linear complementarity constraints to demonstrate the efficiency of our algorithm on finding sparse solutions in practice.
\keywords{Sparse solution \and Complementarity constraints \and Capped-$\ell_1$ folded concave function \and Lifted stationary point \and Vertical linear complementarity constraints}
\end{abstract}

\section{Introduction}
\label{intro}
Let $G$ and $H$ be continuously differentiable functions from $\mathbb{R}^n$ to $\mathbb{R}^m$. The nonlinear complementarity system (NCS) is to find a vector $x\in\mathbb{R}^n$ such that
\begin{equation}\label{GNCP}
  G(x)\geq0,\ H(x)\geq0,\ G(x)^TH(x)=0.
\end{equation}
 Sparse solutions of systems of equalities and inequalities have been studied in economics and finance \cite{s1,s2,s5}, engineering \cite{s2,s13}, data science \cite{s29,s30,YCX} and signal processing \cite{s32,s33}. The following problem is to find sparse solutions of NCS (\ref{GNCP}):
\begin{equation}\tag{$P$\textsubscript{0}}\label{P0}
\begin{split}
\min\ & \|x\|_0\\
{\rm s.t.}\ & G(x)\geq0,\ H(x)\geq0,\ G(x)^TH(x)=0,
\end{split}
\end{equation}
where $\|x\|_0$ denotes the number of nonzero components of $x\in\mathbb{R}^n$.\\

Problem (\ref{P0}) is a mathematical program with complementarity constraints (MPCC) and cardinality objective, which includes sparse optimization for finding sparse solutions of linear equations (LE) and linear complementarity problems (LCP) as special cases. It is known that finding a sparse solution of a system of linear equations is NP-hard \cite{s8}. Cand${\rm\grave{e}}$s and Tao \cite{s8} introduced the restricted isometry property (RIP) and restricted orthogonality (RO) and proved that under RIP and RO on the coefficient matrix, a sparse solution of LE can be obtained by replacing $\|\cdot\|_0$ by $\|\cdot\|_1$. Chen and Xiang \cite{s3} utilized $L_p$ ($0<p<1$) norm to replace $\|\cdot\|_0$ to get a sparse solution of LCP,  and proved that there exists a positive lower bound $\bar{p}$ such that any least-$p$-norm solution is a sparse solution of LCP when $0<p<\bar{p}$.
 \par In \cite{s31}, Bian and Chen considered a box-constrained optimization problem with cardinality penalty, and used capped-$\ell_1$ folded concave function $\phi(t):=\min\{1,\frac{|t|}{\nu}\}, t\in\mathbb{R}$ with $\nu>0$ to replace $\|\cdot\|_0$. In \cite{s34}, Pan and Chen considered convex-constrained group sparse optimization for image recovery using different capped folded concave functions.
   In this paper, we replace $\|x\|_0$ by the capped-$\ell_1$ folded concave function and consider a sparse optimization problem with complementarity constraints as follows:
\begin{equation}\tag{$R$\textsubscript{$\nu$}}\label{Rv}
\begin{split}
\min\ & \Phi(x):=\sum_{i=1}^{n}\phi(x_i)\\
{\rm s.t.}\ & G(x)\geq0,\ H(x)\geq0,\ G(x)^TH(x)=0.
\end{split}
\end{equation}
\par Note that the function $\phi(\cdot)$ can be reformulated as a difference of convex (DC) function, i.e.
\begin{equation*}
  \phi(t)=\frac{|t|}{\nu}-\max\{\theta_1(t),\theta_2(t),\theta_3(t)\},
\end{equation*}
where $\theta_1(t)=0$, $\theta_2(t)=t/\nu-1$ and $\theta_3(t)=-t/\nu-1$. This implies that problem (\ref{Rv}) is a DC program with nonlinear complementarity constraints. The history of DC programs can date back to the work of Pham Dinh and Tao in 1985 \cite{RecentDC}. Pang, Razaviyayn and Alvarado \cite{s68} proposed a novel iterative algorithm for obtaining a d(irectional)-stationary point of a convex-constrained DC program. To our best knowledge, there exists little work on solving complementarity-constrained DC programs. This motivates us to find d-stationary points and lifted stationary points of MPCC, which define stronger stationarity than C-stationarity. (Formal definitions of a d-stationary point
and a lifted stationary point are introduced in subsection \ref{subneccon} with references \cite{directionalCui,directionalD,s68}.)
\par Problem (\ref{Rv}) is a locally Lipschitz MPCC and the work related to MPCC or mathematical programs with equilibrium constraints (MPEC) can be found in \cite{s42,s40,s41,s43,s61}. Recently, Guo and Chen \cite{s44} considered a class of MPCC with a non-Lipschitz continuous objective for obtaining sparse solutions of NCS.
The objective function of problem (\ref{Rv}) can be represented as a difference of two piecewise linear convex functions, which inspires us to develop an efficient method to find sparse solutions of NCS. In this paper, we use an approximation method proposed in \cite{s46} to relax the complementarity constraints and consider the following approximation problem:
\begin{equation}\tag{$R$\textsubscript{$\nu,\sigma$}}\label{Rvsig}
\begin{split}
\min\ & \Phi(x)\\
{\rm s.t.}\ &  G(x)\geq0,H(x)\geq0,G(x)^TH(x)\leq\sigma,
\end{split}
\end{equation}
where $\sigma>0$ is a tolerance parameter. Notice that convergence analysis in \cite{s46} is for a smooth objective function, but the objective function of problem($R_{\nu,\sigma})$ is nonsmooth. Hence we need to develop new convergence results for this scheme.
\par Our contributions can be summarized as follows:
\begin{itemize}
  \item Based on the equivalence between problem (\ref{P0}) and problem (\ref{Rv}) with certain $\nu$, we prove that global minimizers of problem (\ref{Rvsig}) converge to some global minimizers of problem (\ref{P0}) as $\sigma\downarrow0$. Besides, we provide an upper bound for the distance between a feasible point of problem (\ref{Rvsig}) and the feasible set of problem (\ref{Rv}), which can be used to estimate the distance between the solution sets of these two problems under certain conditions. Furthermore, we establish the relationship between MPCC lifted-stationary point of problem (\ref{Rv}) and different kinds of stationary points of problem (\ref{Rv}). See Fig. \ref{figure1}.

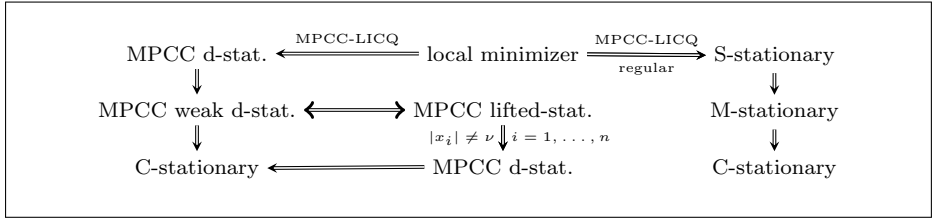
\begin{figure}[H]
\centering
\begin{framed}
\begin{tikzpicture}
  \matrix (m) [matrix of math nodes,row sep=1em,column sep=4.5em,minimum width=1em]
  {
       \text{MPCC d-stat.} &\text{local minimizer} & \text{S-stationary}\\
       \text{MPCC weak d-stat.}&\text{MPCC lifted-stat.}&\text{M-stationary}\\
       \text{C-stationary}&\text{MPCC d-stat.}&\text{C-stationary}\\};
  \path[-stealth]
    (m-1-2) edge [double] node [above] {{\tiny MPCC-LICQ}} (m-1-1)
            edge [double] node [above] {{\tiny MPCC-LICQ}} (m-1-3)
            edge [double] node [below] {{\tiny regular}} (m-1-3)
    (m-1-1) edge [double] node {} (m-2-1)
    (m-1-3) edge [double] node {} (m-2-3)
    (m-2-1) edge [double,<->] node {} (m-2-2)
            edge [double] node {} (m-3-1)
    (m-2-2) edge [double] node [left] {{\tiny $|x_i|\neq\nu$}} (m-3-2)
            edge [double] node [right] {{\tiny $i=1,\ldots,n$}} (m-3-2)
    (m-3-2) edge [double] node {} (m-3-1)
    (m-2-3) edge [double] node {} (m-3-3);
    \end{tikzpicture}
\end{framed}
\caption{The relationship between different kinds of stationary points of problem (\ref{Rv})} \label{figure1}
\end{figure}
  \item We propose an algorithm for solving problem (\ref{Rv}) and show that under MPCC linear independence constraint qualification (LICQ), any accumulation point $x^*$ of approximate stationary points of problem (\ref{Rvsig}) is an MPCC lifted-stationary point of problem (\ref{Rv}) as $\sigma\downarrow0$. Moreover, we propose an augmented Lagrangian (AL) method \cite{s64} to solve subproblem (\ref{Rvsig}). To the best of our knowledge, the AL method is the first algorithm to find a lifted stationary point of MPCC with a nonsmooth objective function.
  \item We consider a special case of problem (\ref{P0}) where the functions $G$ and $H$ are affine functions. In this case,  the subproblem in the AL method  can be solved by a classical DC algorithm \cite{RecentDC} under certain conditions. We conduct numerical experiments to compare our method by using the capped-$\ell_1$ folded concave function and the $L_p$-minimization ($0<p\leq 1$) method for obtaining sparse solutions of a vertical linear complementarity system. The numerical results demonstrate that our algorithm can find more sparse solutions than the $L_p$-minimization method.
\end{itemize}
\par The rest of this paper is organized as follows. In Section 3, we define the MPCC d-stationary point and the MPCC lifted-stationary point of problem (\ref{Rv}) and give necessary optimality conditions of problem (\ref{Rv}). We also provide an upper bound of the distance between a feasible point of problem (\ref{Rvsig}) and the feasible set of problem (\ref{P0}). In Section 4, we propose an approximation method for problem (\ref{Rv}) and an augmented Lagrangian method for subproblem (\ref{Rvsig}) with convergence analysis, respectively. In Section 5, we consider a special case of problem (\ref{P0}) where the feasible set is the solution set of a vertical linear complementarity system(VLCS) and present numerical results. We conclude the paper in Section 6.

\section{Notation}
The solution set and feasible set of problem (\ref{GNCP}) are respectively defined as follows:
\begin{equation*}
      \mathcal{S}=\{x: G(x)\geq0, H(x)\geq0, G(x)^TH(x)=0\},\ \mathcal{F}=\{x: G(x)\geq0, H(x)\geq0\}.
\end{equation*}
For a vector $x\in\mathbb{R}^n$, $x_+:=\max\{x,0\}=(\max(x_1,0), \ldots, \max(x_n,0))^\top$, ${\rm diag}(x)$ is the diagonal matrix whose $i$th diagonal entry is $x_i$. We denote Euclidean norm by $\|x\|$, $\ell_1$ norm by $\|x\|_1$ and $\ell_0$ norm by $\|x\|_0=\sum_{i=1}^n|x_i|^0$ where $|x_i|^0=\left\{\begin{matrix}
                  1, x_i\neq0, \\
                  0, x_i=0.
                \end{matrix}\right.$
For a given $\nu>0$, let
$$ \Gamma_1(x)=\{i:|x_i|<\nu, \, x_i\neq 0\}\ \text{and}\ \Gamma_2(x)=\{i:|x_i|\geq\nu,\, x_i\neq 0\}.
 $$
The support set $\Gamma(x)$ of $x\in\mathbb{R}^n$ is denoted by
 $$\Gamma(x)=\{i: \,  x_i\neq0\}=\Gamma_1(x)\cup\Gamma_2(x).$$
The distance from $x\in\mathbb{R}^n$ to a closed set $\Omega\subseteq\mathbb{R}^n$ is defined by ${\rm dist}(x,\Omega)=\inf\{\|x-y\|:y\in\Omega\}$ and the indicator function is defined by $\textbf{1}_{\Omega}(x)=\left\{\begin{matrix}
                  1, x\in\Omega, \\
                  0, x\notin\Omega.
                \end{matrix}\right.$ Given a point $x\in\mathbb{R}^n$ and $\delta>0$, $\mathcal{B}_{\delta}(x)$ denotes a closed ball centered at $x$ with radius $\delta$. Let $\boldsymbol{e}_i\in\mathbb{R}^n$ denote the $i$th column of the $n$ dimensional identity matrix. Given a matrix $D\in\mathbb{R}^{r\times l}$, $D_i$ denotes the transpose of the $i$th row of $D$ for $i\in\{1,\ldots,r\}$. We let $\nabla F(x)$ stand for the transposed Jacobian of a smooth function $F$ at $x$.
\par For a point $x\in\mathcal{S}$, we define the following index sets:
\begin{equation}
  \begin{split}
     &\mathcal{I}_G(x)=\{i\in\{1,\ldots,m\}:G_i(x)=0\},\\
     &\mathcal{I}_H(x)=\{i\in\{1,\ldots,m\}:H_i(x)=0\},\\
     &\mathcal{I}_{0+}(x):=\mathcal{I}_G(x)\setminus\mathcal{I}_H(x)=\{i: \, G_i(x)=0,\ H_i(x)>0\},\\
     &\mathcal{I}_{+0}(x):=\mathcal{I}_H(x)\setminus\mathcal{I}_G(x)=\{i: \, G_i(x)>0,\ H_i(x)=0\},\\
     &\mathcal{I}_{00}(x):=\mathcal{I}_G(x)\cap\mathcal{I}_H(x)=\{i: \, G_i(x)=0,\ H_i(x)=0\}.\\
  \end{split}
\end{equation}
Given a closed set $\Omega$ and a point $x^*\in\Omega$, the regular normal cone \cite{s4} of $\Omega$ at $x^*$ is a closed and convex cone defined as
\begin{equation*}
  \hat{\mathcal{N}}_{\Omega}(x^*):=\{u: u^T(x-x^*)\leq o(\|x-x^*\|), \forall x\in\Omega\},
\end{equation*}
where $o(\cdot)$ means that $o(\alpha)/\alpha\rightarrow0$ as $\alpha\downarrow0$, and limiting normal cone \cite{s64} of $\Omega$ at $x^*$ is a closed cone defined as
\begin{equation*}
  \mathcal{N}_{\Omega}(x^*):=\{u: \exists\ x^k\in\Omega, x^k\rightarrow x^*, \exists\ u^k\in\hat{\mathcal{N}}_{\Omega}(x^k),\ u^k\rightarrow u\}.
\end{equation*}
For a continuous function $\varphi:\mathbb{R}^n\rightarrow \mathbb{R}$ and a point $x^*\in\mathbb{R}^n$, the regular subdifferential \cite{s4} of $\varphi$ at $x^*$ is defined as
\begin{equation*}
  \hat{\partial}\varphi(x^*):=\{v: \varphi(x)\geq\varphi(x^*)+v^T(x-x^*)+o(\|x-x^*\|), \forall x\in\mathbb{R}^n\},
\end{equation*}
the limiting subdifferential \cite{s4} of $\varphi$ at $x^*$ is defined as
\begin{equation*}
  \partial\varphi(x^*):=\{v: \exists\ x^k\rightarrow x^*, v^k\in\hat{\partial}_{\varphi}(x^k),\ v^k\rightarrow v\}.
\end{equation*}
 If $\Omega$ is a convex set, then $\mathcal{N}_{\Omega}(x)=\hat{\mathcal{N}}_{\Omega}(x)$, see \cite{s4}.

\begin{lemma}\label{lemma1.1}{(Lemma 1.1 in \cite{s44})}
  Let $x\in\mathcal{S}$. Then for any $(u,v)\in\mathbb{R}^m\times\mathbb{R}^m$ with $u_i=0$ $i\in\mathcal{I}_{+0}(x)$, $v_i=0$ $i\in\mathcal{I}_{0+}(x)$, $u_i\geq0, v_i\geq0$ $i\in\mathcal{I}_{00}(x)$, we have that
  \begin{equation*}
    -\nabla G(x)u-\nabla H(x)v\in\mathcal{N}_{\mathcal{S}}(x).
  \end{equation*}
\end{lemma}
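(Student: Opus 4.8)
The plan is to prove the stronger statement that the vector $w := -\nabla G(x)u - \nabla H(x)v$ lies in the \emph{regular} normal cone $\hat{\mathcal{N}}_{\mathcal{S}}(x)$; since taking the constant sequences $x^k\equiv x$, $u^k\equiv w$ in the definition of the limiting normal cone gives $\hat{\mathcal{N}}_{\mathcal{S}}(x)\subseteq\mathcal{N}_{\mathcal{S}}(x)$, this suffices. First I would record the structural fact that, for $x\in\mathcal{S}$, the index sets $\mathcal{I}_{0+}(x)$, $\mathcal{I}_{+0}(x)$, $\mathcal{I}_{00}(x)$ partition $\{1,\dots,m\}$: no index can satisfy both $G_i(x)>0$ and $H_i(x)>0$, because $G(x)^TH(x)=\sum_i G_i(x)H_i(x)$ is a sum of nonnegative terms equal to $0$, forcing $G_i(x)H_i(x)=0$ for every $i$. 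Consequently the hypotheses on $(u,v)$ constrain every coordinate: $u_i=0$ on $\mathcal{I}_{+0}(x)$, $v_i=0$ on $\mathcal{I}_{0+}(x)$, $u_i,v_i\ge 0$ on $\mathcal{I}_{00}(x)$, while $u_i$ on $\mathcal{I}_{0+}(x)$ and $v_i$ on $\mathcal{I}_{+0}(x)$ are unrestricted in sign.

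Next I would fix an arbitrary $y\in\mathcal{S}$ and extract first-order information as $y\to x$. By continuity of $G$ and $H$, for $y$ in a sufficiently small ball around $x$ we have $H_i(y)>0$ for all $i\in\mathcal{I}_{0+}(x)$ and $G_i(y)>0$ for all $i\in\mathcal{I}_{+0}(x)$. Using $\sum_i G_i(y)H_i(y)=0$ with nonnegative summands again, $G_i(y)H_i(y)=0$ for each $i$, so $G_i(y)=0$ on $\mathcal{I}_{0+}(x)$ and $H_i(y)=0$ on $\mathcal{I}_{+0}(x)$. Now expand the active constraint functions to first order at $x$ (recall $G,H\in C^1$): for $i\in\mathcal{I}_{0+}(x)$ the identities $G_i(x)=0=G_i(y)$ give $\nabla G_i(x)^T(y-x)=o(\|y-x\|)$; symmetrically $\nabla H_i(x)^T(y-x)=o(\|y-x\|)$ for $i\in\mathcal{I}_{+0}(x)$; and for $i\in\mathcal{I}_{00}(x)$, from $G_i(x)=H_i(x)=0$, $G_i(y)\ge 0$, $H_i(y)\ge 0$ we get $\nabla G_i(x)^T(y-x)\ge -o(\|y-x\|)$ and $\nabla H_i(x)^T(y-x)\ge -o(\|y-x\|)$. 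The $o(\cdot)$ terms can be chosen uniformly over the finitely many indices.

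Finally I would assemble the estimate. Write $w^T(y-x)=-\sum_{i=1}^m u_i\,\nabla G_i(x)^T(y-x)-\sum_{i=1}^m v_i\,\nabla H_i(x)^T(y-x)$ and split each sum over the partition. On $\mathcal{I}_{0+}(x)$ the $v$-terms vanish and each $u$-term equals $o(\|y-x\|)$ (the equality $\nabla G_i(x)^T(y-x)=o(\|y-x\|)$ makes the sign of $u_i$ irrelevant); symmetrically on $\mathcal{I}_{+0}(x)$; on $\mathcal{I}_{00}(x)$, since $u_i,v_i\ge 0$, each of $-u_i\,\nabla G_i(x)^T(y-x)$ and $-v_i\,\nabla H_i(x)^T(y-x)$ is $\le o(\|y-x\|)$. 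Summing yields $w^T(y-x)\le o(\|y-x\|)$ for all $y\in\mathcal{S}$ near $x$, i.e.\ $w\in\hat{\mathcal{N}}_{\mathcal{S}}(x)\subseteq\mathcal{N}_{\mathcal{S}}(x)$. The step I expect to be the crux — and would handle most carefully — is the passage from the one-sided inequality $G_i(y)\ge 0$ to the \emph{equality} $G_i(y)=0$ on $\mathcal{I}_{0+}(x)$ (and likewise for $H$ on $\mathcal{I}_{+0}(x)$): this is what turns the unrestricted-sign multipliers into $o(\|y-x\|)$ contributions, and it relies essentially on the complementarity constraint being \emph{locally} a strict dichotomy at those indices.
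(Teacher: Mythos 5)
Your proof is correct. Note that the paper does not prove this lemma at all --- it imports it verbatim as Lemma~1.1 of \cite{s44} --- and your argument is the standard direct verification of the ``easy'' inclusion: multipliers with the S-stationarity sign pattern generate elements of the regular normal cone $\hat{\mathcal{N}}_{\mathcal{S}}(x)$, hence of $\mathcal{N}_{\mathcal{S}}(x)$, with no constraint qualification needed. The two points you flag as delicate are handled properly: the partition of $\{1,\dots,m\}$ into $\mathcal{I}_{0+}$, $\mathcal{I}_{+0}$, $\mathcal{I}_{00}$ and the local forcing $G_i(y)=0$ on $\mathcal{I}_{0+}(x)$ (resp.\ $H_i(y)=0$ on $\mathcal{I}_{+0}(x)$) for $y\in\mathcal{S}$ near $x$ both follow from $G(y)^TH(y)$ being a vanishing sum of nonnegative terms, and the rest is first-order Taylor expansion, exactly as in the cited source.
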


If $0\in\mathcal{S}$, then 0 is the unique global minimizer of problem (\ref{P0}). Without loss of generality, we give the following assumption.
\begin{assumption}\label{assump1}
      The solution set $\mathcal{S}$ of problem (\ref{GNCP}) is nonempty and $0\not\in \mathcal{S}.$
\end{assumption}
Throughout Sections 3-4, we assume Assumption \ref{assump1} holds. In Section 5, we provide sufficient conditions for Assumption \ref{assump1} to hold.

\section{Problem (\ref{P0}) and its relaxation problems (\ref{Rv}) and (\ref{Rvsig})}
Since the set $\mathcal{S}$ is closed and $\Phi$ is piecewise linear with $\Phi(x)\ge 0$ for $x\in \mathcal{S}$, problems (\ref{P0}) and (\ref{Rv}) have global minimizers under Assumption \ref{assump1}. First of all, we give the equivalence of global optimality between (\ref{P0}) and (\ref{Rv}).

\begin{lemma}\label{EP0Rv}
There is a $\bar{\nu}>0$ such that problems (\ref{P0}) and (\ref{Rv}) have same global minimizers and same optimal value for any $0<\nu<\bar{\nu}$.
\end{lemma}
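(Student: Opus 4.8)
The plan is to compare the optimal values and the global minimizer sets of (\ref{P0}) and (\ref{Rv}) directly. Write $v_0:=\min_{x\in\mathcal S}\|x\|_0$ and $v_\nu:=\min_{x\in\mathcal S}\Phi(x)$; both minima are attained by the remark preceding the lemma, and $v_0\ge 1$ by Assumption~\ref{assump1}. Since $0\le\phi(t)\le 1$ with $\phi(t)=0$ exactly when $t=0$ and $\phi(t)=1$ exactly when $|t|\ge\nu$, we have $\phi(x_i)\le|x_i|^0$ for every $i$, hence $\Phi(x)\le\|x\|_0$ for all $x$ and $v_\nu\le v_0$ for every $\nu>0$; moreover, if $x^0$ is a fixed global minimizer of (\ref{P0}) and $0<\nu\le\min_{i\in\Gamma(x^0)}|x^0_i|$, then each nonzero $x^0_i$ has $|x^0_i|\ge\nu$, so $\Gamma_1(x^0)=\emptyset$ and $\Phi(x^0)=\|x^0\|_0=v_0$.

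The heart of the proof is the reverse inequality $v_\nu\ge v_0$ for all sufficiently small $\nu$. For $x\in\mathcal S$, the branches in the definition of $\phi$ give $\Phi(x)=|\Gamma_2(x)|+\tfrac{1}{\nu}\sum_{i\in\Gamma_1(x)}|x_i|$. If $\Gamma_1(x)=\emptyset$ this equals $\|x\|_0\ge v_0$ by feasibility, so a feasible $x$ can violate $\Phi(x)\ge v_0$ only when it has a nonzero coordinate of modulus $<\nu$, and then necessarily $|\Gamma_2(x)|\le v_0-1$ and $\sum_{i\in\Gamma_1(x)}|x_i|<v_0\nu$. I would then argue by contradiction: were there no $\bar\nu_1>0$ with $\Phi(x)\ge v_0$ for all $x\in\mathcal S$ and all $\nu\in(0,\bar\nu_1)$, we would obtain $\nu_k\downarrow 0$ and $x^k\in\mathcal S$ with $\Phi(x^k)<v_0$; passing to a subsequence on which $\Gamma_2(x^k)$ equals a fixed set $B$ with $|B|\le v_0-1$, every coordinate $i\notin B$ satisfies $|x^k_i|<v_0\nu_k\to 0$, so, extracting a convergent subsequence, $x^k\to x^*\in\mathcal S$ with $x^*_i=0$ for all $i\notin B$, giving $\|x^*\|_0\le|B|\le v_0-1<v_0$, which is impossible since $x^*\in\mathcal S$. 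This produces the required $\bar\nu_1$, and with the first paragraph, $v_\nu=v_0$ on $(0,\bar\nu_1)$.

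For the coincidence of the global minimizer sets, fix $\nu\in(0,\bar\nu_1)$, so $v_\nu=v_0$. The (\ref{P0})-minimizer set $\mathcal M_0:=\{x\in\mathcal S:\|x\|_0=v_0\}$ is closed because $\|\cdot\|_0$ is lower semicontinuous, and the same sequential argument shows it admits a uniform lower bound $\bar\mu>0$ on the moduli of the nonzero entries of its points, for otherwise a sequence in $\mathcal M_0$ would converge to a feasible point with fewer than $v_0$ nonzero entries. Hence for $0<\nu<\min\{\bar\nu_1,\bar\mu\}$ every $x^0\in\mathcal M_0$ has $\Gamma_1(x^0)=\emptyset$, so $\Phi(x^0)=\|x^0\|_0=v_0=v_\nu$ and $x^0$ solves (\ref{Rv}). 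Conversely, if there were (\ref{Rv})-minimizers $y^k$ with $\Gamma_1(y^k)\ne\emptyset$ for some $\nu_k\downarrow 0$, then $|\Gamma_2(y^k)|\le v_0-1$ and $\sum_{i\in\Gamma_1(y^k)}|y^k_i|<v_0\nu_k\to 0$, and the contradiction argument of the second paragraph again yields a feasible point with fewer than $v_0$ nonzero entries; so there is $\bar\nu_2>0$ such that for $0<\nu<\bar\nu_2$ every (\ref{Rv})-minimizer $y$ has $\Gamma_1(y)=\emptyset$, whence $\Phi(y)=\|y\|_0=v_0$, i.e. $y$ solves (\ref{P0}). Taking $\bar\nu:=\min\{\bar\nu_1,\bar\nu_2,\bar\mu\}$ completes the argument.

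The step I expect to be the main obstacle is the compactness used in the three contradiction arguments: each one must exclude a violating or minimizing sequence in $\mathcal S$ that escapes to infinity while keeping at most $v_0-1$ coordinates bounded away from $0$. This is exactly the mechanism behind the attainment of $v_\nu$ asserted before the lemma, so I would lean on that remark to restrict attention to a bounded sublevel region $\{x\in\mathcal S:\Phi(x)\le v_0\}$ (adding, if necessary, the standing assumption that the relevant part of $\mathcal S$ is bounded); once boundedness of the extracted sequences is in hand, the rest is routine bookkeeping with the index sets $\Gamma_1(x),\Gamma_2(x)$ and the piecewise-linear formula for $\Phi$.
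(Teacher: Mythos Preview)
Your approach differs genuinely from the paper's. Instead of contradiction via sequences, the paper introduces the sparsity cones $Q_s=\{x:\|x\|_0\le s\}$ and sets
\[
\bar\nu=\min_{1\le K\le k}\frac{1}{K}\,{\rm dist}(\mathcal S,Q_{k-K}),\qquad k:=v_0,
\]
then shows $\Phi(x)\ge k$ for every $x\in\mathcal S$ by a direct case split on $r':=|\Gamma_2(x)|$: if $r'\ge k$ the bound is trivial, while if $r'<k$ one uses $\sum_{i\in\Gamma_1(x)}|x_i|\ge {\rm dist}(x,Q_{r'})\ge{\rm dist}(\mathcal S,Q_{r'})\ge (k-r')\bar\nu$ to obtain $\Phi(x)\ge r'+(k-r')\bar\nu/\nu>k$. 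This buys an explicit $\bar\nu$ and avoids any subsequence extraction; the equality of minimizer sets falls out of the same inequality (any $x\in\mathcal S$ with $\|x\|_0=k$ already has every nonzero entry of modulus $\ge{\rm dist}(\mathcal S,Q_{k-1})\ge\bar\nu>\nu$), so your separate constants $\bar\mu,\bar\nu_1,\bar\nu_2$ collapse into one.

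The obstacle you flag is real and your proposed patch does not fix it: the sublevel set $\{x\in\mathcal S:\Phi(x)\le v_0\}$ need not be bounded (take an unbounded ray of $(P_0)$-minimizers, on which $\Phi\equiv v_0$ for small $\nu$), so the remark on attainment of $v_\nu$ does not hand you a compact region in which to extract a convergent subsequence. In effect, your three contradiction arguments are attempts to prove, under a compactness hypothesis, that ${\rm dist}(\mathcal S,Q_s)>0$ for $s<v_0$; the paper simply asserts this positivity from $\mathcal S\cap Q_s=\emptyset$ and proceeds. Thus the underlying obstacle is the same in both proofs, but the paper isolates it as a single quantitative separation statement and then handles everything else by direct inequalities rather than sequential limits.
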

\par The proof of Lemma \ref{EP0Rv} is similar to the proof of Theorem 2.1 in \cite{s34} where the feasible set is defined by the intersection of a polyhedron and a possibly degenerate ellipsoid. Since constraints in problems (\ref{P0}) and (\ref{Rv}) are complementarity constraints, we give a proof for completeness in Appendix A.
\begin{remark}
To estimate $\bar{\nu}$ numerically, we
can use the penalty method. Let $\Theta:R^n\to R^{2m+1}$ be defined as $$\Theta(x)=(-G(x)^T, -H(x)^T, G(x)^TH(x))^T.$$
Then $x$ is a feasible point of ($P_0$) and $(R_\nu$) if and only if $\Theta(x)\le 0$. Suppose that $\Theta$ is Lipschitz continuous with Lipschitz constant $L_\Theta>0$ and there is $\lambda>0$ such that problem ($P_0$) and the penalty problem
\begin{equation}\tag{$R$\textsubscript{1}}\label{penalty}
\min \|x\|_0 +\lambda \|\Theta(x)_+\|_1
\end{equation}
have same minimizers and same optimal values. By Theorem 2.4 in \cite{s31}, for any $\nu \in (0, \bar{\nu})$  with $\bar{\nu}= 1/(\lambda L_\Theta)$, problem (\ref{penalty}) and the following problem
\begin{equation}\tag{$R$\textsubscript{2}}\label{penalty-nu}
\min \Phi(x) +\lambda \|\Theta(x)_+\|_1
\end{equation}
have same minimizers and same optimal values. Under such assumptions, we can have Lemma \ref{EP0Rv} with $\bar{\nu}= 1/(\lambda L_\Theta)$. Moreover, we can use an updating scheme (\ref{updating}) of $\widetilde{\nu}_k$  in our numerical experiments, instead of using a fixed $\nu$.
\end{remark}

\subsection{Necessary optimality conditions for problem (\ref{Rv})}\label{subneccon}

It is known that MPCCs are difficult optimization problems because many of the standard constraint qualifications (CQs) (like the linear independence and the Mangasarian–Fromovitz CQs, respectively, LICQ and MFCQ for short) are violated at any feasible point. MPCC constraint qualifications have been proposed and studied in \cite{s60,s40,s41,s62}. Here we introduce several CQs for MPCC that will be adopted in this paper.
\begin{definition}
   \cite{s60} (i) We say that MPCC-LICQ holds at $x^*\in \mathcal{S}$ for problem (\ref{Rv}) if the following gradients are linearly independent:
  \begin{equation}\label{LIgradients}
    \{\nabla G_i(x^*):i\in\mathcal{I}_G(x^*)\}\cup\{ \nabla H_i(x^*):i\in\mathcal{I}_H(x^*)\}.
  \end{equation}
   \par (ii) We say that MPCC-No Nonzero Abnormal Multiplier Constraint Qualification (NNAMCQ) holds at $x^*\in \mathcal{S}$ for problem (\ref{Rv}) if there is no nonzero vector $u,v\in\mathbb{R}^m$ such that
  \begin{equation*}
  \begin{split}
      & 0=\nabla G(x^*)u+\nabla H(x^*)v, \\
       & u_i=0\ {\rm for}\ i\in\mathcal{I}_{+0}(x^*),\ v_i=0\ {\rm for}\ i\in\mathcal{I}_{0+}(x^*),\\
       &  {\rm either}\ u_i> 0,\ v_i>0\ {\rm or}\ u_iv_i=0\ {\rm for}\ i\in\mathcal{I}_{00}(x^*).
  \end{split}
  \end{equation*}
\end{definition}
\par Moreover, we introduce several classical stationarity conditions for problem (\ref{Rv}) as follows, including Clarke (C-), Mordukhovich (M-), and strong (S-) stationarity conditions.
\begin{definition}
   (i) We say that $x^*\in \mathcal{S}$ is a C-stationary point of problem (\ref{Rv}) if there exist $u\in\mathbb{R}^m$ and $v\in\mathbb{R}^m$ such that
  \begin{equation}\label{weakS}
    \begin{split}
        & 0\in \partial\Phi(x^*)-\nabla G(x^*)u-\nabla H(x^*)v, \\
        & u_i=0\ {\rm for}\ i\in\mathcal{I}_{+0}(x^*),\ v_i=0\ {\rm for}\ i\in\mathcal{I}_{0+}(x^*),
    \end{split}
  \end{equation}
and
  \begin{equation*}
    u_iv_i\geq 0\ {\rm for}\ i\in\mathcal{I}_{00}(x^*).
  \end{equation*}
  \par (ii) We say that $x^*\in \mathcal{S}$ is an M-stationary point of problem (\ref{Rv}) if there exist $u\in\mathbb{R}^m$ and $v\in\mathbb{R}^m$ satisfying (\ref{weakS}) and
  \begin{equation*}
    u_i> 0,\ v_i>0\ {\rm or}\ u_iv_i=0\ {\rm for}\ i\in\mathcal{I}_{00}(x^*).
  \end{equation*}
  \par (iii) We say that $x^*\in \mathcal{S}$ is an S-stationary point of problem (\ref{Rv}) if there exist $u\in\mathbb{R}^m$ and $v\in\mathbb{R}^m$ satisfying (\ref{weakS}) and
  \begin{equation*}
    u_i\geq 0,\ v_i\geq0\ {\rm for}\ i\in\mathcal{I}_{00}(x^*).
  \end{equation*}
\end{definition}
For $t\in\mathbb{R}$, denote
\begin{equation*}
  \mathcal{D}(t)=\{i\in\{1,2,3\}: \theta_i(t)=\max\{\theta_1(t),\theta_2(t),\theta_3(t)\}\}.
\end{equation*}

\par Due to the special structure of function $\Phi(\cdot)$, we define the MPCC lifted-stationary point and MPCC d(irectional)-stationary of problem (\ref{Rv}).
\begin{definition}
   We say that $x^*\in \mathcal{S}$ is an MPCC lifted-stationary point of problem (\ref{Rv}) if there exist $d=(d_1,\cdots,d_n)^T$ with $d_i\in\mathcal{D}(x^*_i)$, $i=1,\ldots,n$, and $u\in\mathbb{R}^m$, $v\in\mathbb{R}^m$ such that
  \begin{equation}\label{MPCCd}
    \begin{split}
        & 0\in \partial(\frac{|x^*_i|}{\nu}) -\theta'_{d_i}(x^*_i)-u^T\nabla G_i(x^*)-v^T\nabla H_i(x^*), \\
        & u_i=0\ {\rm for}\ i\in\mathcal{I}_{+0}(x^*),\ v_i=0\ {\rm for}\ i\in\mathcal{I}_{0+}(x^*),\\
        & {\rm and}\ u_iv_i\geq 0\ {\rm for}\ i\in\mathcal{I}_{00}(x^*).
    \end{split}
  \end{equation}
  Moreover, if for any $d=(d_1,\cdots,d_n)^T$ with $d_i\in\mathcal{D}(x^*_i)$, $i=1,\ldots,n$, there exist $u\in\mathbb{R}^m$, $v\in\mathbb{R}^m$ such that (\ref{MPCCd}) holds, then we call $x^*$ an MPCC d-stationary point of problem (\ref{Rv}).
\end{definition}
\par According to \cite{s68}, the MPCC lifted-stationarity can be called as the MPCC weak d-stationarity of problem (\ref{Rv}). It holds that $$\mathcal{X}_d\subseteq\mathcal{X}_{lif}\subseteq\mathcal{X}_{c},$$ but their inverse may not hold, where $\mathcal{X}_d$, $\mathcal{X}_{lif}$ and $\mathcal{X}_{c}$ denote the MPCC d-stationary point set, MPCC lifted-stationary point set and C-stationary point set of problem (\ref{Rv}).
\par
\begin{theorem}\label{NecOptCon}
  Let $x^*$ be a local minimizer of problem (\ref{Rv}).
  \par (i) Assume that $\mathcal{S}$ is regular at $x^*$, that is, $\mathcal{N}_{\mathcal{S}}(x^*)=\hat{\mathcal{N}}_{\mathcal{S}}(x^*)$. If MPCC-LICQ holds at $x^*$, then $x^*$ is an S-stationary point of problem (\ref{Rv}).
  \par (ii) If MPCC-NNAMCQ holds at $x^*$, then $x^*$ is an M-stationary point of problem (\ref{Rv}).
  \par (iii) If MPCC-LICQ holds at $x^*$, then $x^*$ is an MPCC d-stationary point of problem (\ref{Rv}).
\end{theorem}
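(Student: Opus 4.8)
The plan is to treat all three parts through one common reduction and then a description of the (regular or limiting) normal cone of $\mathcal{S}$ at $x^*$ tailored to the constraint qualification in force. Since $\mathcal{S}$ is closed and $\Phi$ is locally Lipschitz, a local minimizer $x^*$ of (\ref{Rv}) satisfies the basic necessary condition obtained from the generalized Fermat rule and the limiting subdifferential sum rule (applicable because $\Phi$ is Lipschitz), namely $0\in\partial\Phi(x^*)+\mathcal{N}_{\mathcal{S}}(x^*)$, i.e. there is $w\in\partial\Phi(x^*)$ with $-w\in\mathcal{N}_{\mathcal{S}}(x^*)$. Each part is then produced by substituting a suitable description of $\mathcal{N}_{\mathcal{S}}(x^*)$, or of $\hat{\mathcal{N}}_{\mathcal{S}}(x^*)$; Lemma \ref{lemma1.1} already gives the ``easy'' inclusion, so the real content is the reverse inclusion, which is where the CQ enters.

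For (ii) I would write $\mathcal{S}=\Psi^{-1}(\Lambda)$ with $\Psi(x)=(G(x),H(x))$ and $\Lambda=\{(a,b):\ a\ge0,\ b\ge0,\ a^{T}b=0\}$, and apply the limiting normal cone chain rule $\mathcal{N}_{\mathcal{S}}(x^*)\subseteq\nabla\Psi(x^*)^{T}\mathcal{N}_{\Lambda}(\Psi(x^*))$ (see \cite{s4}), whose qualification -- the only $\lambda\in\mathcal{N}_{\Lambda}(\Psi(x^*))$ with $\nabla\Psi(x^*)^{T}\lambda=0$ is $\lambda=0$ -- coincides, after setting $(u,v)=-\lambda$ and using that $G_i(x^*)>0$ and $H_i(x^*)>0$ cannot both hold for $x^*\in\mathcal{S}$, with MPCC-NNAMCQ. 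A componentwise computation of the limiting normal cone of the scalar complementarity set $\{(s,t):\ s\ge0,\ t\ge0,\ st=0\}$ at the origin then shows that $\nabla\Psi(x^*)^{T}\mathcal{N}_{\Lambda}(\Psi(x^*))$, rewritten through $(u,v)$, is exactly the set of $-\nabla G(x^*)u-\nabla H(x^*)v$ with $(u,v)$ obeying the M-stationarity sign pattern; combining with the basic condition gives $0\in\partial\Phi(x^*)-\nabla G(x^*)u-\nabla H(x^*)v$ and (\ref{weakS}) with those signs, i.e. M-stationarity. For (i), the regularity hypothesis lets me replace $\mathcal{N}_{\mathcal{S}}(x^*)$ by $\hat{\mathcal{N}}_{\mathcal{S}}(x^*)$ in the basic condition; since $\hat{\mathcal{N}}_{\mathcal{S}}(x^*)$ is the polar of the Bouligand tangent cone and MPCC-LICQ implies MPCC-ACQ, $\mathcal{T}_{\mathcal{S}}(x^*)$ equals the MPCC-linearized cone, a finite union of polyhedral cones indexed by the splittings $(\alpha,\beta)$ of $\mathcal{I}_{00}(x^*)$. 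Polarizing (the polar of a union is the intersection of the polars, and each branch satisfies ordinary LICQ, so a Farkas argument applies on each) yields $\hat{\mathcal{N}}_{\mathcal{S}}(x^*)=\{-\nabla G(x^*)u-\nabla H(x^*)v:\ u_i=0\ (i\in\mathcal{I}_{+0}(x^*)),\ v_i=0\ (i\in\mathcal{I}_{0+}(x^*)),\ u_i\ge0,\ v_i\ge0\ (i\in\mathcal{I}_{00}(x^*))\}$, and inserting this into $0\in\partial\Phi(x^*)+\hat{\mathcal{N}}_{\mathcal{S}}(x^*)$ gives S-stationarity.

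For (iii), fix an arbitrary selection $d=(d_1,\dots,d_n)^{T}$ with $d_i\in\mathcal{D}(x^*_i)$ and set $g_d(x)=\sum_{i=1}^{n}\big(|x_i|/\nu-\theta_{d_i}(x_i)\big)$. Each summand is convex, so $g_d$ is convex, separable and locally Lipschitz; moreover $g_d\ge\Phi$ on $\mathbb{R}^n$ since $\theta_{d_i}(t)\le\max\{\theta_1(t),\theta_2(t),\theta_3(t)\}$, with equality at $x^*$ since $d_i\in\mathcal{D}(x^*_i)$. Hence for $x\in\mathcal{S}$ near $x^*$, $g_d(x)\ge\Phi(x)\ge\Phi(x^*)=g_d(x^*)$, so $x^*$ is a local minimizer of $g_d$ over $\mathcal{S}$. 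Because MPCC-LICQ implies MPCC-NNAMCQ (the NNAMCQ sign conditions force $\mathrm{supp}(u)\subseteq\mathcal{I}_G(x^*)$ and $\mathrm{supp}(v)\subseteq\mathcal{I}_H(x^*)$, after which linear independence of the gradients in (\ref{LIgradients}) forces $(u,v)=0$), I can run the argument of part (ii) with $\Phi$ replaced by $g_d$ to obtain $(u,v)$ with $0\in\partial g_d(x^*)-\nabla G(x^*)u-\nabla H(x^*)v$ and the M-stationarity signs, which in particular give $u_iv_i\ge0$ on $\mathcal{I}_{00}(x^*)$; by separability the $i$-th component of this inclusion is $0\in\partial(|x^*_i|/\nu)-\theta'_{d_i}(x^*_i)-(\nabla G(x^*)u+\nabla H(x^*)v)_i$, which is (\ref{MPCCd}). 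As $d$ was arbitrary, $x^*$ is an MPCC d-stationary point.

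I expect the crux to lie in the two normal cone descriptions -- the polarization of the MPCC-linearized cone under MPCC-LICQ for (i) and the limiting normal cone chain rule under MPCC-NNAMCQ for (ii). These are the standard MPCC calculus facts (\cite{s4,s60,s40,s41,s62}), but two points need care: that the CQs as phrased here match the qualifications appearing in those results, which hinges on the observation that $G_i(x^*)>0$ and $H_i(x^*)>0$ cannot occur together for $x^*\in\mathcal{S}$; and the implications MPCC-LICQ $\Rightarrow$ MPCC-ACQ and MPCC-LICQ $\Rightarrow$ MPCC-NNAMCQ used above.
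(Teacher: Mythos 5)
Your argument is correct, but it follows a genuinely different route from the paper's. For parts (i) and (ii) the paper simply cites external results (Theorem~2 of Zhang--Zhang--Lin--Yang for S-stationarity under regularity plus MPCC-LICQ, and Theorem~2.3/Corollary~2.1 of Ye--Zhang for M-stationarity under MPCC-NNAMCQ), whereas you rederive them from the Fermat rule $0\in\partial\Phi(x^*)+\mathcal{N}_{\mathcal{S}}(x^*)$ together with the two normal-cone descriptions of $\mathcal{S}$: the limiting chain rule through $\Lambda=\{(a,b):a\ge0,\,b\ge0,\,a^Tb=0\}$ for (ii), and polarization of the MPCC-linearized (union-of-branches) tangent cone for (i). This is essentially the content of the cited theorems unpacked, and your two flagged care points -- that every index of $x^*\in\mathcal{S}$ falls into $\mathcal{I}_{0+}\cup\mathcal{I}_{+0}\cup\mathcal{I}_{00}$ so the chain-rule qualification really is MPCC-NNAMCQ, and that MPCC-LICQ gives uniqueness of multipliers so the intersection of branch polars collapses to the S-multiplier cone -- are exactly the right ones. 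For (iii) you and the paper share the key reduction (for each selection $d$ with $d_i\in\mathcal{D}(x^*_i)$, the point $x^*$ locally minimizes the convex separable surrogate $g_d$ over $\mathcal{S}$, since $g_d\ge\Phi$ with equality at $x^*$), but then diverge: the paper invokes Clarke's multiplier rule to get a Fritz--John condition and uses MPCC-LICQ \`a la Scheel--Scholtes to normalize the leading multiplier, while you observe MPCC-LICQ $\Rightarrow$ MPCC-NNAMCQ and reuse your part-(ii) machinery on $g_d$, obtaining the M-stationarity sign pattern, which is stronger than the $u_iv_i\ge0$ required by (\ref{MPCCd}); separability of $\partial g_d$ then yields the componentwise inclusion. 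Your route is more self-contained and uniform across the three parts and even delivers slightly sharper sign information in (iii); the paper's is shorter on the page because it leans on the literature.
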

\begin{proof}
  (i) Since the objective function of problem (\ref{Rv}) is Lipschitz continuous, $x^*$ is an S-stationary point of problem (\ref{Rv}) when $\mathcal{S}$ is regular at $x^*$ and MPCC-LICQ holds at $x^*$ by Theorem 2 in \cite{s61}.
  \par (ii) This statement follows from Theorem 2.3 and Corollary 2.1 in \cite{s62}.
  \par (iii) Since $x^*$ is a local minimizer of problem (\ref{Rv}), we have that $x^*\in\mathcal{S}$ and there exists a constant $\delta>0$ such that for any $x\in\mathcal{S}\cap\mathcal{B}_{\delta}(x^*)$ and any $d=(d_1,\cdots,d_n)^T$ with $d_i\in\mathcal{D}(x^*_i)$, $i=1,\ldots,n$,
  \begin{equation*}
  \begin{split}
     & \sum_{i=1}^{n}(\frac{|x^*_i|}{\nu}-\theta_{d_i}(x^*_i))=\sum_{i=1}^{n}(\frac{|x^*_i|}{\nu}-\max\{\theta_1(x^*_i),\theta_2(x^*_i),\theta_3(x^*_i)\})\\
       =& \Phi(x^*)\leq\Phi(x) \\
       =&\sum_{i=1}^{n}(\frac{|x_i|}{\nu}-\max\{\theta_1(x_i),\theta_2(x_i),\theta_3(x_i)\})\leq\sum_{i=1}^{n}(\frac{|x_i|}{\nu}-\theta_{d_i}(x_i)),
  \end{split}
  \end{equation*}
  where the first equality holds due to $d_i\in\mathcal{D}(x^*_i)$ for $i=1,\ldots,n$, the second equality holds by the definition of $\Phi(\cdot)$, the first inequality holds because $x^*$ is a local minimizer and the last inequality holds due to the function $\max$.\\
  This implies that for any fixed $d=(d_1,\cdots,d_n)^T$ with $d_i\in\mathcal{D}(x^*_i)$, $i=1,\ldots,n$, $x^*$ is a local minimizer of the following program:
  \begin{equation}\label{MPCCdprog}
    \begin{split}
    \min\ & \sum_{i=1}^{n}(\frac{|x_i|}{\nu}-\theta_{d_i}(x_i))\\
    {\rm s.t.}\ & x\in\mathcal{S}.
    \end{split}
  \end{equation}
  \par By Theorem 1 in \cite{s67}, it follows from the proof of Lemma 1 in \cite{s41} that for any $d=(d_1,\cdots,d_n)^T$ with $d_i\in\mathcal{D}(x^*_i)$, $i=1,\ldots,n$, there exist $\alpha\in\mathbb{R}^+$, $u\in\mathbb{R}^m$, $v\in\mathbb{R}^m$, which are not all equal to zero, such that
  \begin{equation*}
    \begin{split}
        & 0\in \alpha(\partial(\frac{|x^*_i|}{\nu}) -\theta'_{d_i}(x^*_i))-u^T\nabla G_i(x^*)-v^T\nabla H_i(x^*), \\
        & u_i=0\ {\rm for}\ i\in\mathcal{I}_{+0}(x^*),\ v_i=0\ {\rm for}\ i\in\mathcal{I}_{0+}(x^*),\\
        & {\rm and}\ u_iv_i\geq 0\ {\rm for}\ i\in\mathcal{I}_{00}(x^*).
    \end{split}
  \end{equation*}
  \par If MPCC-LICQ holds at $x^*$, then it follows from Theorem 2, (1) in \cite{s41} that for any $d=(d_1,\cdots,d_n)^T$ with $d_i\in\mathcal{D}(x^*_i)$, $i=1,\ldots,n$, there exist $u\in\mathbb{R}^m$, $v\in\mathbb{R}^m$ such that (\ref{MPCCd}) holds, which implies that $x^*$ is an MPCC d-stationary point of problem (\ref{Rv}).\qed
\end{proof}

\par Let $0<\nu<\bar{\nu}$, where $\bar{\nu}$ is defined in Lemma \ref{EP0Rv}, such that problems (\ref{P0}) and (\ref{Rv}) have the same global minimizers and optimal value. For fixed $\nu$, denote $\bar{\mathcal{X}}:=\{x\in\mathcal{S}: |x_i|>\nu\ {\rm or}\ x_i=0\ {\rm for}\ i=1,\ldots,n\}$. Then, for any $\bar{x}\in\bar{\mathcal{X}}$, one can easily verify that $\bar{x}$ is a local minimizer of both problems (\ref{P0}) and (\ref{Rv}).

\subsection{Link between (\ref{Rv}) and (\ref{Rvsig})}

Denote the feasible set of problem (\ref{Rvsig}) by
    $$ \mathcal{S}_{\sigma}:=\left\{
 x\in\mathbb{R}^{n}:  G(x)\geq0, H(x)\geq0, G(x)^TH(x)\leq\sigma
\right\},
$$
where $\sigma>0$.\\

Due to Assumption \ref{assump1} and $\mathcal{S}\subseteq\mathcal{S}_{\sigma}$, we know that $\mathcal{S}_{\sigma}$ is nonempty. Moreover, any feasible point of problems (\ref{P0}) is a feasible point of problem (\ref{Rv}) and (\ref{Rvsig}).
\par Let $X^*_\nu$ and $X_{\sigma}$ be the sets of global minimizers of problem (\ref{Rv}) and (\ref{Rvsig}), respectively. It follows from Lemma \ref{EP0Rv} that $X^*_\nu$ is also the set of global minimizers of problem (\ref{P0}) for sufficiently small $\nu$. Let
\begin{equation*}
  X^*:=\{x^*\in\mathbb{R}^n: \exists\ \sigma_k\rightarrow0\ {\rm with}\ \sigma_k>0\ {\rm and}\ x_{\sigma_k}\in X_{\sigma_k}\ {\rm s.t.}\ x_{\sigma_k}\rightarrow x^*\}.
\end{equation*}
Following the idea of \cite{s46}, we give the following proposition. Since the objective is nonsmooth, we give a simple proof for completeness.
  \begin{proposition}\label{EP0sigRvsig}
   Let $x_{\sigma}$ be a global minimizer of problem (\ref{Rvsig}) for $\sigma>0$. Let $x^*$ be an arbitrary accumulation point of $\{x_{\sigma}\}$ as $\sigma\downarrow0$. Then $x^*$ is a global minimizer of (\ref{Rv}), i.e., $X^*\subseteq X^*_\nu$.
  \end{proposition}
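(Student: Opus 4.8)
The plan is to run the classical limiting argument for relaxation schemes, using only continuity of $G$, $H$ and $\Phi$ together with the nestedness $\mathcal{S}\subseteq\mathcal{S}_\sigma$. First I would fix a subsequence $\sigma_k\downarrow 0$ with $x_{\sigma_k}\to x^*$, which exists by the definition of an accumulation point. Two things then have to be shown: that $x^*$ is feasible for (\ref{Rv}), i.e.\ $x^*\in\mathcal{S}$, and that $\Phi(x^*)$ equals the optimal value of (\ref{Rv}).

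For feasibility I would pass to the limit in the defining inequalities of $\mathcal{S}_{\sigma_k}$. Since $G$ and $H$ are continuous, $G(x_{\sigma_k})\ge 0$ and $H(x_{\sigma_k})\ge 0$ yield $G(x^*)\ge 0$ and $H(x^*)\ge 0$, while $G(x_{\sigma_k})^TH(x_{\sigma_k})\le\sigma_k\to 0$ gives $G(x^*)^TH(x^*)\le 0$. Combining this with the sign constraints $G(x^*)\ge 0$, $H(x^*)\ge 0$ forces $G(x^*)^TH(x^*)=0$, hence $x^*\in\mathcal{S}$.

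For optimality, let $\bar x$ be any global minimizer of (\ref{Rv}), which exists under Assumption \ref{assump1}. Because $\mathcal{S}\subseteq\mathcal{S}_{\sigma_k}$ for every $k$, $\bar x$ is feasible for (\ref{Rvsig}) with $\sigma=\sigma_k$, and since $x_{\sigma_k}$ is a global minimizer of that problem, $\Phi(x_{\sigma_k})\le\Phi(\bar x)$. Letting $k\to\infty$ and using continuity of $\Phi$ gives $\Phi(x^*)\le\Phi(\bar x)$. Conversely, $x^*\in\mathcal{S}$ is feasible for (\ref{Rv}), so $\Phi(x^*)\ge\Phi(\bar x)$ by optimality of $\bar x$. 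Therefore $\Phi(x^*)=\Phi(\bar x)$, so $x^*$ is a global minimizer of (\ref{Rv}), which is precisely the inclusion $X^*\subseteq X^*_\nu$.

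I do not expect a genuine obstacle here; the argument is short. The only points that need a word of care are (a) the implication $G(x^*)^TH(x^*)\le 0\Rightarrow G(x^*)^TH(x^*)=0$, which depends crucially on keeping the componentwise sign constraints in the limit, and (b) invoking existence of a global minimizer of (\ref{Rv}), which is already guaranteed since $\mathcal{S}$ is closed and $\Phi$ is continuous, nonnegative and piecewise linear on $\mathcal{S}$, as noted before Lemma \ref{EP0Rv}. Everything else is continuity plus $\mathcal{S}\subseteq\mathcal{S}_\sigma$.
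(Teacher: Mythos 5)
Your proposal is correct and follows essentially the same route as the paper's proof: pass to the limit in the constraints of $(R_{\nu,\sigma_k})$ using continuity of $G$, $H$, $\Phi$, and use $\mathcal{S}\subseteq\mathcal{S}_{\sigma_k}$ to compare $\Phi(x_{\sigma_k})$ with $\Phi(\bar x)$ for a global minimizer $\bar x$ of $(R_\nu)$. You merely spell out slightly more explicitly the step that $G(x^*)^TH(x^*)\le 0$ together with the sign constraints forces equality, which the paper leaves implicit.
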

  \begin{proof}
   The existence of global minimizer of problem (\ref{Rv}) follows from Assumption 1 and the piecewise linear and nonnegative property of the objective function. Let $\bar{x}$ be a global minimizer of problem (\ref{Rv}). Since $x_{\sigma}$ is a global minimizer of problem (\ref{Rvsig}) and $\mathcal{S}\subseteq\mathcal{S}_{\sigma}$, we have
\begin{equation*}
 G(x_{\sigma})\geq0, H(x_{\sigma})\geq0, G(x_{\sigma})^TH(x_{\sigma})\leq\sigma,
\end{equation*}
and $\Phi(x_{\sigma})\leq\Phi(\bar{x})$.

Let $\sigma$ tend to 0. Since $\Phi(\cdot)$ is continuous, we can obtain that
\begin{equation*}
 G(x^*)\geq0, H(x^*)\geq0, G(x^*)^TH(x^*)\leq0,
\end{equation*}
and $\Phi(x^*)\leq\Phi(\bar{x})$.  This implies that $x^*\in X^*_\nu$. \qed
  \end{proof}
\par The following example shows that Proposition \ref{EP0sigRvsig} may not hold for local minimizers and stationary points.
\begin{example}
Let $m=1$ and $n=2$. Consider the following problem :
\begin{equation}\tag{$E$\textsubscript{0}}\label{exP0}
    \begin{split}
        \min\ & \|x\|_0\\
        {\rm s.t.}\ & x\in\mathcal{S}:=\{z\in\mathbb{R}^2:z_1\geq0,z_2-1\geq0,z_1(z_2-1)=0\}.
    \end{split}
\end{equation}
The approximations of  problem (\ref{exP0}) are as follows, respectively:
\begin{equation}\tag{$E$\textsubscript{$\nu$}}\label{exRv}
    \begin{split}
        \min\ & \Phi(x):=\min(1, |x_1|/\nu) + \min(1, |x_2|/\nu)\\
        {\rm s.t.}\ & x\in\mathcal{S},
    \end{split}
\end{equation}
and
\begin{equation}\tag{$E$\textsubscript{$\nu,\sigma$}}\label{exRvsig}
    \begin{split}
        \min\ & \Phi(x):=\min(1, |x_1|/\nu) + \min(1, |x_2|/\nu)\\
        {\rm s.t.}\ & x\in\mathcal{S}_{\sigma}:=
        \{z\in\mathbb{R}^2:z_1\geq0,z_2-1\geq0,z_1(z_2-1)\leq\sigma\}.
    \end{split}
\end{equation}
For any $0<\nu<\bar{\nu}=0.5$,  problems (\ref{exP0}) and (\ref{exRv}) have the same global minimizers $(0,t)^T,t\geq1$. Let $0<\sigma<\nu$. We consider $\bar{x}_{\sigma}=(\nu+\sigma,\frac{\sigma}{2(\nu+\sigma)}+1)^T$ and its neighborhood $\mathcal{B}_{\bar{\delta}}(\bar{x}_{\sigma})$ with $\bar{\delta}=\frac{\sigma}{4}$, then one can verify that $\bar{x}_{\sigma}\in\mathcal{S}_{\sigma}$ and $\Phi(\bar{x}_{\sigma})=2=\Phi(x)$ for any $x\in\mathcal{S}_{\sigma}\cap\mathcal{B}_{\bar{\delta}}(\bar{x}_{\sigma})$, which implies that $\bar{x}_{\sigma}$ is a local minimizer of problem (\ref{exRvsig}). However, as $\sigma\downarrow0$, $\bar{x}_{\sigma}$ converges to $\bar{x}=(\nu,1)^T$. For any $0<\delta<\nu$, consider $\hat{x}=(\nu-\frac{\delta}{2},1)^T$. We can verify that $\hat{x}\in\mathcal{S}\cap\mathcal{B}_{\delta}(\bar{x})$ and $\Phi(\hat{x})=2-\frac{\delta}{2\nu}<2=\Phi(\bar{x})$, which implies that $\bar{x}$ is not a local minimizer of problem (\ref{exRv}).
\par As for stationarity, one can verify that for any $0<\sigma<\nu$, $\bar{x}_{\sigma}$ is a lifted stationary point of problem (\ref{exRvsig}) and its limit point $\bar{x}$ is an MPCC lifted-stationary point of problem (\ref{exRv}). On the other hand, $\bar{x}_{\sigma}$ is a d-stationary point of problem (\ref{exRvsig}) for $\sigma<\nu$, but    $\bar{x}$ is not an MPCC d-stationary point of problem (\ref{exRv}).
\end{example}

\par Since problem (\ref{Rvsig}) only contains inequality constraints, for the convenience of presentation, we denote
\begin{equation*}
  g(x):=-\begin{pmatrix}
          G(x) \\
          H(x)
        \end{pmatrix}{\rm and}\ h_{\sigma}(x):=G(x)^TH(x)-\sigma.
\end{equation*}
\par Compared with problem (\ref{Rv}), problem (\ref{Rvsig}) can satisfy some constraint qualifications, such as Relaxed Constant Positive Linear Dependence (RCPLD) condition (Definition 2.2, \cite{s64}).

\begin{proposition}\label{3.1}
  (Proposition 3.1, \cite{s44}) Let MPCC-LICQ hold at $x^*\in\mathcal{S}$. Then there exist $\sigma_0>0$ and $\delta_0>0$ such that for any $\sigma\in(0,\sigma_0]$ and  $x\in\mathcal{B}_{\delta_0}(x^*)\cap\mathcal{S}_{\sigma}$, the standard LICQ holds at $x$ for the following system
  \begin{equation*}
    \{x: g(x)\leq0, h_{\sigma}(x)\leq0\}.
  \end{equation*}
  In particular, the standard LICQ holds at $x\in\mathcal{B}_{\delta_0}(x^*)\cap\mathcal{S}_{\sigma}$ with $\sigma\in(0,\sigma_0]$.
\end{proposition}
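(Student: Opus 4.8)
The plan is to first pin down $\delta_0$ using only continuity and MPCC-LICQ, and then to extract $\sigma_0$ by a compactness/contradiction argument. For $\delta_0$ I would take it small enough that two things hold on $\mathcal{B}_{\delta_0}(x^*)$: (a) every $G_i$ with $i\notin\mathcal{I}_G(x^*)$ and every $H_i$ with $i\notin\mathcal{I}_H(x^*)$ stays bounded below by a positive constant, so that at any feasible $x$ near $x^*$ the active indices of $-G$ lie in $\mathcal{I}_G(x^*)$ and those of $-H$ in $\mathcal{I}_H(x^*)$; and (b) the family (\ref{LIgradients}) stays linearly independent -- this is automatic because its Gram matrix is continuous in $x$ and positive definite at $x^*$, hence positive definite on a whole ball, which also gives a uniform constant $c_0>0$ with $\|\sum_i a_i\nabla G_i(x)+\sum_i b_i\nabla H_i(x)\|\ge c_0\|(a,b)\|$ for all $x\in\mathcal{B}_{\delta_0}(x^*)$.

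Assuming the conclusion fails for this $\delta_0$, I would pick $\sigma_k\downarrow0$ and $x^k\in\mathcal{B}_{\delta_0}(x^*)\cap\mathcal{S}_{\sigma_k}$ at which standard LICQ fails for $\{x:g(x)\le0,\ h_{\sigma_k}(x)\le0\}$, pass to a subsequence along which the active sets $A_G=\{i:G_i(x^k)=0\}\subseteq\mathcal{I}_G(x^*)$ and $A_H=\{i:H_i(x^k)=0\}\subseteq\mathcal{I}_H(x^*)$ are constant and $h_{\sigma_k}(x^k)$ is either always negative or always zero, and record the corresponding dependence: there are $(\alpha^k,\beta^k,\gamma^k)$ of norm $1$ with $\gamma^k=0$ in the inactive case and
\begin{equation*}
\sum_{i\in A_G}\alpha_i^k\nabla G_i(x^k)+\sum_{i\in A_H}\beta_i^k\nabla H_i(x^k)+\gamma^k\nabla h_{\sigma_k}(x^k)=0 .
\end{equation*}
If $\gamma^k=0$ this already contradicts the uniform independence in (b), since the remaining vectors are a subfamily of (\ref{LIgradients}) at $x^k$. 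So the real work is the case $h_{\sigma_k}(x^k)=0$, $\gamma^k\neq0$. Here I would split $\nabla h_{\sigma_k}(x^k)=\sum_j\bigl(H_j(x^k)\nabla G_j(x^k)+G_j(x^k)\nabla H_j(x^k)\bigr)$ into a part $w_k$ supported on the gradients appearing in (\ref{LIgradients}) (the terms with $j\in\mathcal{I}_G(x^*)$ carrying $\nabla G_j$, and $j\in\mathcal{I}_H(x^*)$ carrying $\nabla H_j$) and a remainder $r_k$; since $x^k\in\mathcal{S}_{\sigma_k}$ and the complementary $G_j$ or $H_j$ are bounded below, $\|r_k\|=O(\sigma_k)\to0$. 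Projecting the dependence relation onto (\ref{LIgradients}) and using $c_0$, each combined coefficient $\alpha_i^k+\gamma^kH_i(x^k)$ ($i\in\mathcal{I}_G(x^*)$, with $\alpha_i^k:=0$ off $A_G$) and $\beta_i^k+\gamma^kG_i(x^k)$ ($i\in\mathcal{I}_H(x^*)$, similarly) is $O(\sigma_k)\to0$. Passing to $\gamma^k\to\bar\gamma$: if $\mathcal{I}_{0+}(x^*)\not\subseteq A_G$ (or $\mathcal{I}_{+0}(x^*)\not\subseteq A_H$) I pick an index $i_0$ in the difference, note $H_{i_0}(x^k)$ (resp.\ $G_{i_0}(x^k)$) is bounded below, conclude $\bar\gamma=0$ from $\gamma^kH_{i_0}(x^k)\to0$, and then $\alpha^k,\beta^k$ tend to $0$ as well -- contradicting norm $1$.

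The hard part is the remaining subcase $\mathcal{I}_{0+}(x^*)\subseteq A_G$ and $\mathcal{I}_{+0}(x^*)\subseteq A_H$: there the family (\ref{LIgradients}) together with $\nabla h_{\sigma_k}(x^k)$ is only \emph{asymptotically} dependent, so the limiting argument above yields no contradiction, and one must use the \emph{exact} relation. In this subcase $H_j(x^k)=0$ for $j\in\mathcal{I}_{+0}(x^*)$ and $G_j(x^k)=0$ for $j\in\mathcal{I}_{0+}(x^*)$, so every term of $\nabla h_{\sigma_k}(x^k)$ carrying a gradient outside (\ref{LIgradients}) vanishes identically; hence $\nabla h_{\sigma_k}(x^k)=w_k$ lies exactly in the span of (\ref{LIgradients}) at $x^k$, the whole dependence relation does too, and linear independence forces $\alpha_i^k+\gamma^kH_i(x^k)=0$ for every $i\in\mathcal{I}_G(x^*)$ and $\beta_i^k+\gamma^kG_i(x^k)=0$ for every $i\in\mathcal{I}_H(x^*)$. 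Finally I would invoke $\sigma_k>0$: in this subcase $h_{\sigma_k}(x^k)=0$ reads $\sum_{j\in\mathcal{I}_{00}(x^*)}G_j(x^k)H_j(x^k)=\sigma_k>0$, so there is $j_1\in\mathcal{I}_{00}(x^*)$ with $G_{j_1}(x^k)>0$ and $H_{j_1}(x^k)>0$; since $G_{j_1}(x^k)>0$ we have $j_1\notin A_G$, hence $\alpha_{j_1}^k=0$, and the coefficient equation at $j_1$ becomes $\gamma^kH_{j_1}(x^k)=0$, i.e.\ $\gamma^k=0$, whence $\alpha^k=\beta^k=0$ -- contradicting norm $1$ once more. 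This exhausts the cases and produces the desired $\sigma_0$; the last sentence of the proposition is then immediate, since $\mathcal{S}_\sigma$ is exactly the feasible set of $\{x:g(x)\le0,\ h_\sigma(x)\le0\}$.
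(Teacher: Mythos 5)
Your argument is correct, and it is complete: the choice of $\delta_0$ via continuity of the inactive components and of the Gram matrix of the family (\ref{LIgradients}), the reduction to a sequence $\sigma_k\downarrow 0$ with constant active sets, the $O(\sigma_k)$ estimate on the combined coefficients $\alpha_i^k+\gamma^kH_i(x^k)$ and $\beta_i^k+\gamma^kG_i(x^k)$, and, in the only delicate subcase, the observation that $G(x^k)^TH(x^k)=\sigma_k>0$ forces some $j_1\in\mathcal{I}_{00}(x^*)$ with $G_{j_1}(x^k)>0$ and $H_{j_1}(x^k)>0$, which kills $\gamma^k$, all check out. Note that the paper itself does not prove this proposition; it is quoted from Guo and Chen \cite{s44}, whose argument follows the same template (MPCC-LICQ plus the expansion of $\nabla h_\sigma$ into the $G$- and $H$-gradients, with the positivity of $\sigma$ supplying the final contradiction), so your proof is essentially the standard one, written out in full.
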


In the following, we will provide an upper bound for the distance between a feasible point of problem (\ref{Rvsig}) and the feasible set of problem (\ref{Rv}).

\begin{theorem}\label{Errorbound}
    \par (i) Let $x^*\in\mathcal{S}$. If MPCC-NNAMCQ holds at $x^*$, then there exist constants $\delta_0>0$, $\beta\geq0$ such that for any $x\in\mathcal{B}_{\delta_0}(x^*)\cap\mathcal{S}_{\sigma}$,
  \begin{equation*}
  {\rm dist}(x,\mathcal{S})\leq\beta\sigma.
  \end{equation*}
  \par (ii) If both $G$ and $H$ are polynomials, then there exist constants $\beta,\tau>0$ and $\gamma\geq0$ such that for any $x\in\mathcal{S}_{\sigma}$,
  \begin{equation*}
    {\rm dist}(x,\mathcal{S})\leq\beta(1+\|x\|)^{\gamma}\sigma^{\tau}.
  \end{equation*}
  \par (iii) If both $G$ and $H$ are affine functions, then there exists a constant $\beta>0$ such that for any $x\in\mathcal{S}_{\sigma}$,
  \begin{equation*}
    {\rm dist}(x,\mathcal{S})\leq\beta(\sqrt{\sigma}+\sigma).
  \end{equation*}
\end{theorem}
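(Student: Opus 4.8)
The plan is to prove all three inequalities by the same device: an error bound for the complementarity feasible set $\mathcal{S}$, combined with the elementary fact that the complementarity residual of a point of $\mathcal{S}_{\sigma}$ is small. Concretely, if $x\in\mathcal{S}_{\sigma}$ then $G(x)\ge 0$, $H(x)\ge 0$ and $\sum_{i=1}^{m}G_i(x)H_i(x)\le\sigma$, so $G_i(x)H_i(x)\le\sigma$ for every $i$; hence $\min\{G_i(x),H_i(x)\}\le\sqrt{G_i(x)H_i(x)}\le\sqrt{\sigma}$, and $\min\{G_i(x),H_i(x)\}\le\sigma/c$ whenever $\max\{G_i(x),H_i(x)\}\ge c>0$. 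Feeding these bounds into the appropriate error bound for $\mathcal{S}$ produces the $\beta\sigma$, $\beta(1+\|x\|)^{\gamma}\sigma^{\tau}$ and $\beta(\sqrt{\sigma}+\sigma)$ right-hand sides of (i)--(iii).

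For part (i) I would localize at $x^*$: by continuity of $G$ and $H$ the active index sets only shrink near $x^*$, so on a small ball $\mathcal{B}_{\delta_0}(x^*)$ the set $\mathcal{S}$ is the finite union, over bipartitions $(\alpha,\beta)$ of $\mathcal{I}_{00}(x^*)$, of the $C^1$ branches $\mathcal{S}_{\alpha\beta}$ on which $G_i=0$ for $i\in\mathcal{I}_{0+}(x^*)\cup\alpha$, $H_i=0$ for $i\in\mathcal{I}_{+0}(x^*)\cup\beta$, and $G_i\ge 0$, $H_i\ge 0$ for the remaining indices. The key point to establish is that MPCC-NNAMCQ at $x^*$ implies the classical no-nonzero-abnormal-multiplier condition at $x^*$ for each branch (the branch's multiplier system being an instance of the MPCC one with the biactive signs fixed by $(\alpha,\beta)$); by the Mordukhovich coderivative criterion --- equivalently, Robinson's stability theorem for $C^1$ equality/inequality systems --- each branch then admits a local Lipschitz error bound ${\rm dist}(x,\mathcal{S}_{\alpha\beta})\le\beta_{\alpha\beta}\,r_{\alpha\beta}(x)$, with $r_{\alpha\beta}$ the natural residual of that branch's constraints. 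Since there are finitely many branches and ${\rm dist}(x,\mathcal{S})=\min_{(\alpha,\beta)}{\rm dist}(x,\mathcal{S}_{\alpha\beta})$ on $\mathcal{B}_{\delta_0}(x^*)$, setting $\beta:=\max_{(\alpha,\beta)}\beta_{\alpha\beta}$ reduces the claim to bounding $\min_{(\alpha,\beta)}r_{\alpha\beta}(x)$ for $x\in\mathcal{B}_{\delta_0}(x^*)\cap\mathcal{S}_{\sigma}$. That bound follows from the first paragraph: on $\mathcal{I}_{0+}(x^*)$ one has $H_i(x)\ge\frac{1}{2} H_i(x^*)>0$ once $\delta_0$ is small, whence $G_i(x)=O(\sigma)$, and symmetrically on $\mathcal{I}_{+0}(x^*)$, while at the biactive indices one chooses $(\alpha,\beta)$ so as to annihilate the smaller of $G_i(x)$ and $H_i(x)$.

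For part (ii) the polynomial structure lets me invoke a H\"olderian ({\L}ojasiewicz-type) global error bound for polynomial inequality systems. Since $G$ and $H$ are polynomials, $\mathcal{S}=\{x:\,-G_i(x)\le 0,\ -H_i(x)\le 0\ (i=1,\dots,m),\ G(x)^{\top}H(x)\le 0\}$ is a basic closed set cut out by finitely many polynomials and is nonempty by Assumption~\ref{assump1}, so there exist $\beta,\tau>0$ and $\gamma\ge 0$ with ${\rm dist}(x,\mathcal{S})\le\beta(1+\|x\|)^{\gamma}\bigl(\max\{(-G_1(x))_{+},\dots,(-H_m(x))_{+},(G(x)^{\top}H(x))_{+}\}\bigr)^{\tau}$ for all $x\in\mathbb{R}^n$; restricting to $x\in\mathcal{S}_{\sigma}$, where the $(-G_i)_{+}$ and $(-H_i)_{+}$ terms vanish and $(G(x)^{\top}H(x))_{+}\le\sigma$, gives (ii). For part (iii) I would take the following hands-on route: with $G,H$ affine, $\mathcal{S}$ is a finite union of polyhedra, one for each choice --- at every index $i$ --- of whether $G_i$ or $H_i$ is the vanishing component; Hoffman's lemma gives a global Lipschitz error bound on each nonempty piece, and for $x\in\mathcal{S}_{\sigma}$ one selects the piece that zeroes out the smaller of $G_i(x),H_i(x)$ at each $i$, so that (by $G_i(x)H_i(x)\le\sigma$ and Cauchy--Schwarz) the residual of $x$ with respect to that piece is of order $\sqrt{\sigma}$; the finitely many Hoffman constants combine into a single $\beta$. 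Equivalently, one can specialize (ii): the only nonlinear defining function is the nonnegative quadratic gap $G(x)^{\top}H(x)$, which vanishes on each affine branch, so $\tau=\frac{1}{2}$ and affineness lets the factor $(1+\|x\|)^{\gamma}$ be absorbed, leaving the extra $\sigma$ as slack.

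I expect the main obstacle to be part (i): converting MPCC-NNAMCQ --- which restricts multipliers only through the $G,H$-structure --- into a genuine (branch-wise) metric subregularity statement for $\mathcal{S}$, and at the same time controlling the biactive indices $\mathcal{I}_{00}(x^*)$, where only the product bound $G_i(x)H_i(x)\le\sigma$ is at hand and the residual estimate is most delicate. Parts (ii) and (iii) should be comparatively routine once the right off-the-shelf error-bound theorems are in place; the only point of care in (iii) is ensuring the polyhedral piece one selects is nonempty, which can be arranged separately using $\mathcal{S}\neq\emptyset$.
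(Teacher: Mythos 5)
The genuine gap is in part (i), and it sits exactly where you predicted it would. The paper's entire proof of (i) is a citation: Proposition 3.4 of \cite{s65} gives, under MPCC-NNAMCQ, constants $\beta,\delta_0$ with ${\rm dist}(x,\mathcal{S})\le\beta\,[G(x)^TH(x)]_+$ for all $x\in\mathcal{B}_{\delta_0}(x^*)\cap\mathcal{F}$; since the residual there is the \emph{product} $G^TH$, which is at most $\sigma$ on $\mathcal{S}_\sigma$, the linear rate $\beta\sigma$ falls out immediately. Your branch decomposition instead measures the violation of a branch $\mathcal{S}_{\alpha\beta}$ at a biactive index $i\in\mathcal{I}_{00}(x^*)$ by $\min\{G_i(x),H_i(x)\}$, and from $G_i(x)H_i(x)\le\sigma$ alone this is only $O(\sqrt{\sigma})$: the values $G_i(x)=H_i(x)=\sqrt{\sigma}$ are consistent with $x\in\mathcal{S}_\sigma$, and at such a point \emph{every} choice of $(\alpha,\beta)$ leaves a residual of $\sqrt{\sigma}$ at index $i$ (and the distance to the union of branches is itself of order $\sqrt{\sigma}$ in the simplest such configurations). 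So even granting the branchwise Lipschitz error bounds you want to extract from MPCC-NNAMCQ via Robinson/Mordukhovich, your argument delivers ${\rm dist}(x,\mathcal{S})\le\beta\sqrt{\sigma}$ near $x^*$, not $\beta\sigma$. The upgrade from the min-residual to the product-residual at biactive indices is precisely the content of the cited proposition, and the proposal contains no mechanism for it; you flag this as the delicate point but do not close it.

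Parts (ii) and (iii) are in much better shape. For (ii) you invoke exactly the global H\"olderian error bound for polynomial systems that the paper takes from Theorem 2.2 of \cite{s57}, and the specialization to $x\in\mathcal{S}_\sigma$ is the same one-line computation; this part is essentially identical to the paper. For (iii) the paper does not decompose $\mathcal{S}$ into polyhedra: it cites the global error bound for extended linear complementarity systems (Theorem 6.1 of \cite{s56}), which controls ${\rm dist}(x,\mathcal{S})$ by $\|\min(G(x),H(x))\|+\|\min(G(x),H(x))\|^2+G(x)^TH(x)+(G(x)^TH(x))^{1/2}$, and then uses $\min(G_i,H_i)^2\le G_iH_i$ on $\mathcal{F}$ to get $2\beta_0(\sqrt{\sigma}+\sigma)$. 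Your Hoffman-plus-union-of-pieces route is a reasonable alternative but leaves two obligations unmet: the piece that annihilates the smaller of $G_i(x),H_i(x)$ at each index may be empty, and switching to a nonempty piece changes the residual to something that need not be $O(\sqrt{\sigma})$, so ``arranged separately using $\mathcal{S}\ne\emptyset$'' does not suffice as stated; and the fallback of specializing (ii) with ``$\tau=\tfrac12$ and the factor $(1+\|x\|)^{\gamma}$ absorbed'' asserts exponents that the Luo--Luo theorem does not provide. The citation of \cite{s56} is what discharges both points in the paper.
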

\begin{proof}
  (i) According to Proposition 3.4 in \cite{s65}, we know that there exist constants $\beta\geq0$ and $\delta_0>0$ such that
  \begin{equation}\label{Errorformula1}
    {\rm dist}(x,\mathcal{S})\leq\beta[G(x)^TH(x)]_+,\ \forall\ x\in\mathcal{B}_{\delta_0}(x^*)\cap\mathcal{F}.
  \end{equation}
  Thus, it follows from (\ref{Errorformula1}) that for any $x\in\mathcal{B}_{\delta_0}(x^*)\cap\mathcal{S}_{\sigma}$,
  \begin{equation*}
     {\rm dist}(x,\mathcal{S})\leq\beta[h_{\sigma}(x)+\sigma]_+\leq\beta\sigma,
  \end{equation*}
  which implies that statement (i) holds.
  \par (ii) If both $G,H$ are polynomials, then it follows from Theorem 2.2 in \cite{s57} that there exist constants $\beta,\tau>0$ and $\gamma\geq0$ such that
  \begin{equation*}
    {\rm dist}(x,\mathcal{S})\leq\beta(1+\|x\|)^{\gamma}(\|[g(x)]_+\|+[G(x)^TH(x)]_+)^{\tau},\ \forall x\in\mathbb{R}^n,
  \end{equation*}
  which implies that for any feasible point $x\in\mathcal{S}_{\sigma}$ of problem (\ref{Rvsig}),
  \begin{equation*}
  \begin{split}
      {\rm dist}(x,\mathcal{S})&\leq\beta(1+\|x\|)^{\gamma}(\|[g(x)]_+\|+[h_{\sigma}(x)+\sigma]_+)^{\tau}\\
       & \leq\beta(1+\|x\|)^{\gamma}\sigma^{\tau}.
  \end{split}
  \end{equation*}
  \par (iii) If both $G,H$ are affine functions, then it follows from Theorem 6.1 in \cite{s56} that there exists a constant $\beta_0>0$ such that for any $ x\in\mathbb{R}^n$,
  \begin{equation*}
  \begin{split}
     {\rm dist}(x,\mathcal{S}) & \leq\beta_0[\|\min(G(x),H(x))\|^2+\|\min(G(x),H(x)\| \\
       & +\|G(x)^TH(x)\|+\|G(x)^TH(x)\|^{\frac{1}{2}}].
  \end{split}
  \end{equation*}
  This implies that for any  $x\in\mathcal{S}_{\sigma}$,
  \begin{equation*}
  \begin{split}
      {\rm dist}(x,\mathcal{S})&\leq\beta_0[G(x)^TH(x)+\sqrt{G(x)^TH(x)}+|h_{\sigma}(x)+\sigma|+|h_{\sigma}(x)+\sigma|^{\frac{1}{2}}]\\
       & \leq2\beta_0(\sqrt{\sigma}+\sigma).
  \end{split}
  \end{equation*}
   Let $\beta=2\beta_0$, then the proof is completed.\qed
\end{proof}
\par Theorem \ref{Errorbound} can be used to numerically estimate the distance between a global minimizer  $x_{\sigma}$ of ($R_{\nu,\sigma}$) and the optimum set $X^*_\nu$  of ($R_{\nu}$).  By Proposition \ref{EP0sigRvsig}, $x_{\sigma}$ converges to a solution $x^*$ of problem  ($R_{\nu}$) as $\sigma \to 0$. Let $\bar{x}\in {\cal S}$ satisfy  $\|x_\sigma-\bar{x}\| =$dist$(x_\sigma, {\cal S})$. By Theorem \ref{Errorbound}, and $X^*_\nu \subseteq {\cal S}$,  there is a constant $\beta>0$ such that
$$ {\rm dist}(x_\sigma, X^*_\nu) \ge {\rm dist } (x_\sigma, {\cal S})$$
and
$$ {\rm dist}(x_\sigma, X^*_\nu)\le \|x_\sigma-x^*\|\le{\rm dist } (x_\sigma, {\cal S}) + \|\bar{x}-x^*\|\le \beta\sigma + \|\bar{x}-x^*\|.$$
By Proposition \ref{EP0sigRvsig}, $\|\bar{x}-x^*\|\rightarrow0$ as $\sigma\rightarrow0$. Moreover, if $x^*$ is an isolate point in ${\cal S}$, then $\|\bar{x}-x^*\|=0$ for sufficiently small $\sigma$.

\section{Algorithm for problem (\ref{Rv})}
\par In Section 3, it has been pointed out that any accumulation point of global minimizers of problem (\ref{Rvsig}) is a global minimizer of problem (\ref{Rv}) as $\sigma$ approaching to 0. Thus, in this section, we divide our algorithm into two parts: an approximation method for problem (\ref{Rv}) and an augmented Lagrangian (AL) method for its subproblem (\ref{Rvsig}) with a fixed parameter $\sigma$.

\subsection{An approximation method for problem (\ref{Rv})}
In Section 1, we have mentioned that problem (\ref{Rvsig}) is an approximation problem of problem (\ref{Rv}). Since it is difficult to find an exact optimal solution of problem (\ref{Rvsig}), in this subsection, we present a method in which we only need to find an approximate stationary point of problem ($R_{\nu,\sigma}$) at each iteration and prove the convergence of this method to an MPCC lifted-stationary point of problem (\ref{Rv}).
\begin{algorithm}[H]
\caption{ Approximation method}
\label{alg:A}
\begin{algorithmic}
\normalsize
\STATE {Let $\{\sigma_k\}$ and $\{\epsilon^k\}$ be sequences of nonnegative parameters approaching to 0. Choose an arbitrary point $x^0\in\mathbb{R}^n$ and set $k=1$.\\
(1) Solve problem ($R_{\nu,\sigma_k})$ with initial point $x^{k-1}$ to get $x^k$ such that there exist
$d = (d_1, \cdots,d_n)^T$ with $d_i\in {\cal D}(x_i^k), i=1,\ldots, n$ and
$\zeta^k\in \mathbb{R}^n $ with $\|\zeta^k\|\le \epsilon^k$
satisfying
\begin{equation}\label{stopping}
  \zeta^k\in\partial\left(\sum_{i=1}^{n}\frac{|x^k_i|}{\nu}\right)-\sum_{i=1}^{n}\theta'_{d_i}(x^k_i)\boldsymbol{e}_i+\mathcal{N}_{\mathcal{S}_{\sigma_k}}(x^k).
\end{equation}
(2) Set $k\leftarrow k+1$, and go to Step (1).}\\
\textbf{END}
\end{algorithmic}
\end{algorithm}
 \begin{theorem}\label{algAMPCClifted}
    Let $\{x^k\}$ be a sequence generated by Algorithm \ref{alg:A} and $x^*$ be an arbitrary accumulation point of $\{x^k\}$. Suppose that MPCC-LICQ holds at $x^*$. Then $x^*$ is an MPCC lifted-stationary point of problem (\ref{Rv}).
  \end{theorem}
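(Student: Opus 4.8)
The plan is to take a convergent subsequence and pass to the limit in the approximate stationarity system (\ref{stopping}), converting the normal cone $\mathcal{N}_{\mathcal{S}_{\sigma_k}}(x^k)$ into a KKT representation via Proposition \ref{3.1} and using MPCC-LICQ at $x^*$ to keep the resulting multipliers under control. Here is how I would organize it.

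\emph{Setting up the limit.} Since $x^*$ is an accumulation point, fix $K$ with $x^k\to x^*$ for $k\in K$. The vector $d$ in (\ref{stopping}) may vary with $k$, but it ranges over the finite set $\{1,2,3\}^n$, so after thinning $K$ we may assume $d\equiv\bar d$ is fixed with $\bar d_i\in\mathcal{D}(x^k_i)$ for all $k\in K$; since each $\theta_j$ is affine, $\theta'_{\bar d_i}(x^k_i)$ is a constant equal to $\theta'_{\bar d_i}(x^*_i)$. From $x^k\in\mathcal{S}_{\sigma_k}$ and $\sigma_k\downarrow0$ one gets $x^*\in\mathcal{S}$, and from the closedness of the graph $\{(t,j):j\in\mathcal{D}(t)\}$ (all $\theta_j$ continuous) one gets $\bar d_i\in\mathcal{D}(x^*_i)$, so $\bar d$ is an admissible index vector at $x^*$. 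Unwinding (\ref{stopping}), there are $w^k\in\partial(\sum_i|x^k_i|/\nu)$ and $\eta^k\in\mathcal{N}_{\mathcal{S}_{\sigma_k}}(x^k)$ with $\eta^k=\zeta^k-w^k+\sum_i\theta'_{\bar d_i}(x^*_i)\boldsymbol{e}_i$. Every component of $w^k$ lies in $[-1/\nu,1/\nu]$, so after a further thinning $w^k\to w^*$, and $w^*\in\partial(\sum_i|x^*_i|/\nu)$ by outer semicontinuity of the subdifferential of the separable convex function $\sum_i|x_i|/\nu$. Since $\|\zeta^k\|\le\epsilon^k\to0$, we obtain $\eta^k\to\eta^*:=-w^*+\sum_i\theta'_{\bar d_i}(x^*_i)\boldsymbol{e}_i$.

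\emph{KKT representation and the combined multipliers.} By MPCC-LICQ at $x^*$, Proposition \ref{3.1} gives that for all large $k\in K$ the standard LICQ holds at $x^k$ for the system $g(x)\le0$, $h_{\sigma_k}(x)\le0$ defining $\mathcal{S}_{\sigma_k}$; hence $\mathcal{S}_{\sigma_k}$ is regular at $x^k$ and $\mathcal{N}_{\mathcal{S}_{\sigma_k}}(x^k)$ is generated by the gradients of the active constraints with nonnegative multipliers and complementarity. Collecting the multipliers of $G(x)\ge0$, $H(x)\ge0$ into $\lambda^{k,G},\lambda^{k,H}\ge0$ and the multiplier of $h_{\sigma_k}$ into $\rho^k\ge0$, and using $\nabla h_{\sigma_k}(x^k)=\nabla G(x^k)H(x^k)+\nabla H(x^k)G(x^k)$, I would rewrite $\eta^k=-\nabla G(x^k)u^k-\nabla H(x^k)v^k$ with $u^k:=\lambda^{k,G}-\rho^kH(x^k)$ and $v^k:=\lambda^{k,H}-\rho^kG(x^k)$. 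The complementarity relations ($\lambda^{k,G}_i=0$ when $G_i(x^k)>0$, $\lambda^{k,H}_i=0$ when $H_i(x^k)>0$, $\rho^k=0$ unless $G(x^k)^TH(x^k)=\sigma_k$) will be used throughout: for $i\in\mathcal{I}_{+0}(x^*)$ one has $G_i(x^k)>0$ for large $k$, so $u^k_i=-\rho^kH_i(x^k)$, and symmetrically $v^k_i=-\rho^kG_i(x^k)$ for $i\in\mathcal{I}_{0+}(x^*)$; for any $i$ with $G_i(x^k),H_i(x^k)>0$ both $\lambda$-multipliers vanish, so $u^k_iv^k_i=(\rho^k)^2G_i(x^k)H_i(x^k)$; and a short sign check on the remaining cases shows $u^k_iv^k_i\ge0$ for every $i$.

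\emph{The main obstacle: boundedness of $\{(u^k,v^k)\}$.} The individual multipliers $\lambda^k,\rho^k$ need not be bounded, so the crux is to bound the combined vectors. I would argue by contradiction: if $s_k:=\|(u^k,v^k)\|\to\infty$, normalize $(\tilde u^k,\tilde v^k):=(u^k,v^k)/s_k\to(\bar u,\bar v)$ with $\|(\bar u,\bar v)\|=1$; dividing $\eta^k=-\nabla G(x^k)u^k-\nabla H(x^k)v^k$ by $s_k$ gives $\nabla G(x^*)\bar u+\nabla H(x^*)\bar v=0$. Summing $u^k_iv^k_i=(\rho^k)^2G_i(x^k)H_i(x^k)$ over the indices with $G_i(x^k),H_i(x^k)>0$ (the other terms of $G(x^k)^TH(x^k)$ being zero) yields $(\rho^k)^2\sigma_k\le\sum_i|u^k_i||v^k_i|\le m\,s_k^2$, hence $\rho^k\sigma_k\le\sqrt m\,s_k\sqrt{\sigma_k}$. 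Since $G_i(x^k)H_i(x^k)\le\sigma_k$ and $G_i(x^k)\ge G_i(x^*)/2>0$ for $i\in\mathcal{I}_{+0}(x^*)$, we get $|u^k_i|=\rho^kH_i(x^k)\le2\rho^k\sigma_k/G_i(x^*)$, so $|\tilde u^k_i|\le2\sqrt m\,\sqrt{\sigma_k}/G_i(x^*)\to0$, i.e. $\bar u_i=0$ on $\mathcal{I}_{+0}(x^*)$, and symmetrically $\bar v_i=0$ on $\mathcal{I}_{0+}(x^*)$. Then $\bar u$ is supported on $\mathcal{I}_G(x^*)$, $\bar v$ on $\mathcal{I}_H(x^*)$, and $\nabla G(x^*)\bar u+\nabla H(x^*)\bar v=0$ forces $\bar u=\bar v=0$ by MPCC-LICQ, contradicting $\|(\bar u,\bar v)\|=1$. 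So $\{(u^k,v^k)\}$ is bounded; the same estimate then gives $\rho^k\sigma_k\to0$, hence $u^k_i\to0$ for $i\in\mathcal{I}_{+0}(x^*)$ and $v^k_i\to0$ for $i\in\mathcal{I}_{0+}(x^*)$.

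\emph{Conclusion.} After one last thinning $(u^k,v^k)\to(u^*,v^*)$, so $\eta^*=-\nabla G(x^*)u^*-\nabla H(x^*)v^*$ with $u^*_i=0$ on $\mathcal{I}_{+0}(x^*)$, $v^*_i=0$ on $\mathcal{I}_{0+}(x^*)$, and $u^*_iv^*_i\ge0$ on $\mathcal{I}_{00}(x^*)$. Combining with $\eta^*=-w^*+\sum_i\theta'_{\bar d_i}(x^*_i)\boldsymbol{e}_i$ and $w^*\in\partial(\sum_i|x^*_i|/\nu)$ gives $w^*=\sum_i\theta'_{\bar d_i}(x^*_i)\boldsymbol{e}_i+\nabla G(x^*)u^*+\nabla H(x^*)v^*$, which is precisely the inclusion (\ref{MPCCd}) for the admissible choice $d=\bar d$. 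Hence $x^*$ is an MPCC lifted-stationary point of (\ref{Rv}). I expect the boundedness step to be the only genuinely delicate point; the rest is bookkeeping with subsequences, continuity, and Proposition \ref{3.1}.
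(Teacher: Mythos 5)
Your proposal is correct and follows essentially the same route as the paper: represent $\mathcal{N}_{\mathcal{S}_{\sigma_k}}(x^k)$ via KKT multipliers using Proposition \ref{3.1}, form the combined multipliers $u^k=\alpha^k-\gamma^kH(x^k)$, $v^k=\beta^k-\gamma^kG(x^k)$, invoke MPCC-LICQ to bound them and pass to the limit, and use the product identity $u^k_iv^k_i=\alpha^k_i\beta^k_i+(\gamma^k)^2G_i(x^k)H_i(x^k)\ge0$ for the sign condition. The only difference is that you spell out the normalization/contradiction argument for boundedness and the vanishing of $u^k_i$ on $\mathcal{I}_{+0}(x^*)$ (via the bound $\rho^k\sigma_k\le\sqrt{m}\,s_k\sqrt{\sigma_k}$), details the paper defers to the proof of Theorem 4.1 in \cite{s44} and to ``it is not hard to verify.''
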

\begin{proof}
By Proposition \ref{3.1} and relations that $\sigma_k\rightarrow0$ and $x^k\rightarrow x^*$, it follows that LICQ holds at $x^k\in\mathcal{S}_{\sigma_k}$ when $k$ is sufficiently large. This together with Theorem 6.14 in \cite{s4}, implies that
      $$ \mathcal{N}_{\mathcal{S}_{\sigma_k}}(x^k)\subseteq\left\{
\begin{aligned}
 &-\nabla G(x^k)\alpha^k-\nabla H(x^k)\beta^k+&   \alpha^k\geq0, G(x^k)^T\alpha^k=0 \\
&[\nabla G(x^k)H(x^k)+\nabla H(x^k)G(x^k)]\gamma^k:\   & \beta^k\geq0, H(x^k)^T\beta^k=0 \\
& &  \gamma^k\geq0, h_{\sigma_k}(x^k)\gamma^k=0
\end{aligned}
\right\}.
$$
    It then follows from (\ref{stopping}) that for all $k$ sufficiently large, there exist $\alpha^k,\beta^k,\gamma^k$ and $d^k=(d^k_1,\cdots,d^k_n)^T$ with $d^k_i\in\mathcal{D}(x^k_i)$, $i=1,\ldots,n$, such that
    \begin{equation}\label{4.3}
    \begin{split}
       \zeta^k & \in\partial\left(\sum_{i=1}^{n}\frac{|x^k_i|}{\nu}\right)-\sum_{i=1}^{n}\theta'_{d^k_i}(x^k_i)\boldsymbol{e}_i \\
         & -\nabla G(x^k)(\alpha^k-\gamma^kH(x^k))-\nabla H(x^k)(\beta^k-\gamma^kG(x^k)),
    \end{split}
    \end{equation}
    \begin{equation}\label{4.4}
      \alpha^k\geq0, G(x^k)^T\alpha^k=0, \beta^k\geq0, H(x^k)^T\beta^k=0, \gamma^k\geq0, h_{\sigma_k}(x^k)\gamma^k=0.
    \end{equation}
For simplicity, we denote $\mathcal{I}^*_{0+}:=\mathcal{I}_{0+}(x^*)$, $\mathcal{I}^*_{00}:=\mathcal{I}_{00}(x^*)$, and $\mathcal{I}^*_{+0}:=\mathcal{I}_{+0}(x^*)$. Similar to the proof of Theorem 4.1 in \cite{s44}, one has
    \begin{equation}\label{4.9}
         \sum_{i\in\mathcal{I}^*_{0+}\cup\mathcal{I}^*_{00}}u_i^k\nabla G_i(x^k)+\sum_{i\in\mathcal{I}^*_{+0}\cup\mathcal{I}^*_{00}}v_i^k\nabla H_i(x^k)\in-\zeta^k+\partial\left(\sum_{i=1}^{n}\frac{|x^k_i|}{\nu}\right)-\sum_{i=1}^{n}\theta'_{d^k_i}(x^k_i)\boldsymbol{e}_i,
    \end{equation}
    where
    \begin{equation}\label{4.6}
      u^k:=\alpha^k-\gamma^kH(x^k), v^k:=\beta^k-\gamma^kG(x^k).
    \end{equation}
    Due to the boundedness of the right hand side of (\ref{4.9}), by MPCC-LICQ at $x^*$, it is not hard to verify that there exist subsequences $\{u_i^{k_j}:i\in\mathcal{I}^*_{0+}\cup\mathcal{I}^*_{00}\}$ and $\{v_i^{k_j}:i\in\mathcal{I}^*_{+0}\cup\mathcal{I}^*_{00}\}$ of sequences $\{u_i^{k}:i\in\mathcal{I}^*_{0+}\cup\mathcal{I}^*_{00}\}$ and $\{v_i^{k}:i\in\mathcal{I}^*_{+0}\cup\mathcal{I}^*_{00}\}$, respectively, converge to some $\{u_i^*:i\in\mathcal{I}^*_{0+}\cup\mathcal{I}^*_{00}\}$ and $\{v_i^*:i\in\mathcal{I}^*_{0+}\cup\mathcal{I}^*_{00}\}$. Taking limits on both sides of (\ref{4.9}), since the elements in $\mathcal{D}(x^*_i)$ for $ i=1,\ldots,n$ are finite, we have that there exists $d^*=(d^*_1,\cdots,d^*_n)^T$ with $d^*_i\in\mathcal{D}(x^*_i)$, $i=1,\ldots,n$, such that
    \begin{equation}\label{4.10}
      \sum_{i\in\mathcal{I}^*_{0+}\cup\mathcal{I}^*_{00}}u_i^*\nabla G_i(x^*)+\sum_{i\in\mathcal{I}^*_{+0}\cup\mathcal{I}^*_{00}}v_i^*\nabla H_i(x^*)\in\partial\left(\sum_{i=1}^{n}\frac{|x^*_i|}{\nu}\right)-\sum_{i=1}^{n}\theta'_{d^*_i}(x^*_i)\boldsymbol{e}_i.
    \end{equation}
    The inequality $u^*_iv^*_i\geq0$ follows immediately since
    \begin{equation*}
      \begin{split}
         u^{k_j}_iv^{k_j}_i & =(\alpha^{k_j}_i-\gamma^{k_j}H_i(x^{k_j}))(\beta^{k_j}_i-\gamma^{k_j}G_i(x^{k_j})) \\
           & =\alpha^{k_j}_i\beta^{k_j}_i-\gamma^{k_j}\alpha^{k_j}_iG_i(x^{k_j})-\gamma^{k_j}\beta^{k_j}_iH_i(x^{k_j})+(\gamma^{k_j})^2G_i(x^{k_j})H_i(x^{k_j}) \\
           & =\alpha^{k_j}_i\beta^{k_j}_i+(\gamma^{k_j})^2G_i(x^{k_j})H_i(x^{k_j})\geq0,
      \end{split}
    \end{equation*}
    where the last equality follows from (\ref{4.4}). This together with (\ref{4.10}) implies that $x^*$ is an MPCC lifted-stationary point of problem (\ref{Rv}).\qed
\end{proof}

\subsection{An AL method for problem (\ref{Rvsig})}\label{subAL}

\par In this subsection, for each fixed $\sigma>0$, we propose an AL method to solve problem (\ref{Rvsig}). We handle the easy constraints
$g(x)\le 0$ directly while penalize the hard constraint $h_\sigma(x)\le 0$  into the
objective function.
Therefore, problem (\ref{Rvsig}) can be rewritten as
\begin{equation}\tag{$R$\textsuperscript{AL}}\label{RvsigAL}
\begin{split}
\min_{x\in\mathcal{F}}\ & \Phi(x)\\
{\rm s.t.}\ & h_{\sigma}(x)\leq0.
\end{split}
\end{equation}
\par For any given penalty parameter $\rho>0$ and Lagrangian multipliers $\mu$, the AL function for problem (\ref{RvsigAL}) is defined as
\begin{equation}\label{Lfun}
  \mathcal{L}(x,\mu,\rho):=\Phi(x)+\varphi(x,\mu,\rho),
\end{equation}
where $\varphi(x,\mu,\rho):=\frac{1}{2\rho}([\mu+\rho h_{\sigma}(x)]_+^2-\mu^2)$.
\par  At each outer iteration we approximate problem (\ref{RvsigAL}) by an AL subproblem with fixed $\rho>0$ and $\mu$ in the form of
\begin{equation}\tag{$P$\textsuperscript{AL}}\label{PAL}
\begin{split}
\min_{x\in\mathcal{F}}\ & \mathcal{L}(x,\mu,\rho).
\end{split}
\end{equation}
\begin{definition}
  \cite{s68} We say that $x^*\in\mathcal{S}_{\sigma}$ is a lifted stationary point of problem (\ref{Rvsig}) if $\mathcal{S}_{\sigma}$ is regular at $x^*$ and there exists $d=(d_1,\cdots,d_n)^T$ with $d_i\in\mathcal{D}(x^*_i)$, $i=1,\ldots,n$, such that
  \begin{equation}\label{Rvsiglifted}
0\in\partial\left(\sum_{i=1}^{n}\frac{|x^*_i|}{\nu}\right)-\sum_{i=1}^{n}\theta'_{d_i}(x^*_i)\boldsymbol{e}_i+\mathcal{N}_{\mathcal{S}_{\sigma}}(x^*).
  \end{equation}
\end{definition}
\par From \cite{s31}, it follows that lifted stationarity is stronger than Clarke stationarity. Denote a feasible point of problem (\ref{RvsigAL}) by $x^{feas}_{\sigma}$. For any fixed $\sigma$, the AL method for problem (\ref{Rvsig}) is proposed as follows.

\begin{algorithm}[H]
\caption{ AL method}
\label{alg:B}
\begin{algorithmic}
\normalsize
\STATE {Choose $\mu^0\in\mathbb{R}$, $x^0_{init}\in\mathcal{F}$, $\rho_0>0$, $\gamma\in(1,\infty)$, $\tau,\vartheta\in(0,1)$, a nonnegative sequence $\{\epsilon_l\}$, and a constant}
\begin{equation*}
  \Upsilon\geq \max\{\Phi(x^{feas}_{\sigma}),\mathcal{L}(x^0_{init},\mu^0,\rho_0)\}.
\end{equation*}
\STATE {Set $l=0$.
\par(1) Solve problem ($P^{AL}$) with $\mu=\mu^l$ and $\rho=\rho_l$ to find a point $x^l\in\mathcal{F}$ such that there exists $d=(d_1,\cdots,d_n)^T$ with $d_i\in\mathcal{D}(x^l_i)$, $i=1,\ldots,n$, satisfying
\begin{equation}\label{stopping2}
\begin{split}
    &  {\rm dist}(0,\partial\varphi(x^l,\mu^l,\rho_l)+\partial\left(\sum_{i=1}^{n}\frac{|x^l_i|}{\nu}\right)-\sum_{i=1}^{n}\theta'_{d_i}(x^l_i)\boldsymbol{e}_i+\mathcal{N}_{\mathcal{F}}(x^l))\leq\epsilon_l, \\
     & \mathcal{L}(x^l,\mu^l,\rho_l)\leq\Upsilon.
\end{split}
  \end{equation}
\par(2) Set
\begin{gather}
  \mu^{l+1}=[\mu^l+\rho_lh_{\sigma}(x^l)]_+, \label{Upmu}\\
   \xi^{l+1}=\min\{\mu^{l+1}\/\rho_l,-h_{\sigma}(x^l)\},\label{Upxi}
\end{gather}
\par(3) If $l>0$ and
\begin{equation}\label{Conditionrho}
  |\xi^{l+1}|\leq\vartheta|\xi^l|,
\end{equation}
then set $\rho_{l+1}=\rho_l$. Otherwise, set
\begin{equation}\label{Uprho}
  \rho_{l+1}=\max\{\gamma\rho_l,(\mu^{l+1})^{1+\tau}\}.
\end{equation}
(4) Set $l\leftarrow l+1$ and go to Step (1).}\\
\textbf{END}
\end{algorithmic}
\end{algorithm}
\begin{theorem}\label{AL}
   Assume that $\lim_{l\rightarrow\infty}\epsilon_l=0$ for Algorithm \ref{alg:B}. Let $\{x^l\}$ be the sequence generated by Algorithm \ref{alg:B} and $x^*$ an accumulation point of $\{x^l\}$. Then the following statements hold:
     \par (i) $[h_{\sigma}(x^l)]_+\rightarrow0$ as $l\rightarrow\infty$.
  \par (ii) $x^*$ is a feasible point of problem (\ref{Rvsig}).
  \par (iii) Assume that $\mathcal{S}_{\sigma}$ is regular at $x^*$. If RCPLD holds at $x^*$ for system $g(x)\leq0$, then $x^*$ is a lifted stationary point of problem (\ref{Rvsig}).
\end{theorem}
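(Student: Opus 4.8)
The plan is to handle the three assertions in turn, following the standard feasibility/optimality analysis of safeguarded augmented Lagrangian methods (as developed in \cite{s64}); the only genuinely new feature is that $\Phi$ is nonsmooth, so the stationarity information carried by (\ref{stopping2}) is expressed through subgradients and the selection $d^{l}$.

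For (i), I would distinguish whether $\{\rho_l\}$ is bounded. If it is, Step (3) eventually freezes $\rho_l\equiv\bar\rho$, so the safeguard test (\ref{Conditionrho}) holds at every large $l$ and $|\xi^l|\to 0$ at a geometric rate. A direct inspection of (\ref{Upxi}) shows $[h_\sigma(x^l)]_+\le|\xi^{l+1}|$ for all $l$ (indeed, when $h_\sigma(x^l)>0$ the minimum in (\ref{Upxi}) is attained at $-h_\sigma(x^l)$ because $\mu^{l+1}/\rho_l\ge 0$, while when $h_\sigma(x^l)\le 0$ the left side is $0$), hence $[h_\sigma(x^l)]_+\to 0$. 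If instead $\rho_l\to\infty$, then combining the a priori bound $\mathcal{L}(x^l,\mu^l,\rho_l)\le\Upsilon$ from (\ref{stopping2}) with $\Phi\ge 0$ and the elementary estimate $[\mu^l+\rho_l h_\sigma(x^l)]_+\ge\rho_l[h_\sigma(x^l)]_+$ gives
\[
  [h_\sigma(x^l)]_+^2\ \le\ \frac{2\Upsilon}{\rho_l}+\frac{(\mu^l)^2}{\rho_l^2},
\]
and the right side vanishes: the first term because $\rho_l\to\infty$, the second because the safeguard (\ref{Uprho}) forces $\rho_l\ge(\mu^l)^{1+\tau}$ along the subsequence of penalty increases (so $(\mu^l)^2/\rho_l^2\le\rho_l^{-2\tau/(1+\tau)}\to 0$ there), while along the intervening frozen stretches (\ref{Conditionrho}) again drives $[h_\sigma(x^l)]_+\to 0$. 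Statement (ii) is then immediate: $x^l\in\mathcal{F}$ with $\mathcal{F}$ closed gives $g(x^*)\le 0$, and continuity of $h_\sigma$ together with (i) gives $[h_\sigma(x^*)]_+=0$, so $x^*\in\mathcal{S}_\sigma$.

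For (iii), the key point is that $\varphi(\cdot,\mu,\rho)$ in (\ref{Lfun}) is $C^1$ in $x$, with $\nabla_x\varphi(x^l,\mu^l,\rho_l)=[\mu^l+\rho_l h_\sigma(x^l)]_+\nabla h_\sigma(x^l)=\mu^{l+1}\nabla h_\sigma(x^l)$ by (\ref{Upmu}). Thus the first relation in (\ref{stopping2}) yields, for each $l$, a selection $d^l$ with $d^l_i\in\mathcal{D}(x^l_i)$, a subgradient $s^l\in\partial\big(\sum_{i=1}^{n}|x^l_i|/\nu\big)$, and a vector $\eta^l\in\mathcal{N}_{\mathcal{F}}(x^l)$ with $\|\mu^{l+1}\nabla h_\sigma(x^l)+s^l-\sum_{i=1}^{n}\theta'_{d^l_i}(x^l_i)\boldsymbol{e}_i+\eta^l\|\le\epsilon_l$. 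Along a subsequence $x^{l_j}\to x^*$ I would pass to a further subsequence on which $d^{l_j}\equiv d^*$ with $d^*_i\in\mathcal{D}(x^*_i)$ (finiteness of $\{1,2,3\}$ and continuity of the $\theta_i$), $s^{l_j}\to s^*\in\partial\big(\sum_{i}|x^*_i|/\nu\big)$ (boundedness and outer semicontinuity of $\partial\|\cdot\|_1$), and — using that RCPLD for $g(x)\le 0$ holds at $x^*$ and persists in a neighbourhood — write $\eta^{l_j}=-\nabla g(x^{l_j})\lambda^{l_j}$ with $\lambda^{l_j}\ge 0$ supported on the active indices and $\{(\mu^{l_j+1},\lambda^{l_j})\}$ bounded. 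Passing to the limit (and using the safeguard to conclude $\mu^*h_\sigma(x^*)=0$) gives
\[
  0=\mu^*\nabla h_\sigma(x^*)+s^*-\sum_{i=1}^{n}\theta'_{d^*_i}(x^*_i)\boldsymbol{e}_i-\nabla g(x^*)\lambda^*,
\]
with $\mu^*\ge 0$, $\lambda^*\ge 0$, $\lambda^*$ complementary to $g(x^*)$ and $\mu^*$ complementary to $h_\sigma(x^*)$. Since under these complementarity conditions $-\nabla g(x^*)\lambda^*\in\hat{\mathcal{N}}_{\mathcal{F}}(x^*)$ and $\mu^*\nabla h_\sigma(x^*)\in\hat{\mathcal{N}}_{\{h_\sigma\le 0\}}(x^*)$, and regular normal cones of two sets add into the regular normal cone of their intersection, we obtain $-\nabla g(x^*)\lambda^*+\mu^*\nabla h_\sigma(x^*)\in\hat{\mathcal{N}}_{\mathcal{S}_\sigma}(x^*)=\mathcal{N}_{\mathcal{S}_\sigma}(x^*)$ by the assumed regularity of $\mathcal{S}_\sigma$ at $x^*$; rearranging the displayed equality is exactly (\ref{Rvsiglifted}).

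The main obstacle is part (iii), and within it the possible blow-up of the multiplier-type quantities $\mu^{l+1}$ and of the normal-cone multipliers $\lambda^l$ of $\mathcal{F}$: keeping them along a bounded sequence — or, in the unbounded case, normalising and contradicting the constant-rank condition — is precisely the role of RCPLD, and this has to be combined with the freezing-versus-increasing dichotomy of $\{\rho_l\}$ and definition (\ref{Upxi})--(\ref{Conditionrho}) of the safeguard to extract the complementarity $\mu^*h_\sigma(x^*)=0$. The regularity hypothesis on $\mathcal{S}_\sigma$ enters only at the final step, to pass from the regular normal cone to the limiting normal cone in the definition (\ref{Rvsiglifted}) of a lifted stationary point.
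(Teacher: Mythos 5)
Your proposal follows essentially the same route as the paper: the same bounded/unbounded dichotomy for $\{\rho_l\}$ in (i) together with the estimate $[h_\sigma(x^l)]_+\le|\xi^{l+1}|$ and the $\Upsilon$-bound (the paper delegates $\mu^l/\rho_l\to 0$ to the proof of Theorem 3.1(i) in \cite{s64}), and in (iii) the same ingredients — the RCPLD representation of $\mathcal{N}_{\mathcal{F}}$ near $x^*$, the vanishing of $\mu^{l+1}$ along the subsequence when $h_\sigma(x^*)<0$, finiteness of the index selections $d^l$, and the elementary inclusion of the limiting multiplier combination into $\mathcal{N}_{\mathcal{S}_\sigma}(x^*)$. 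The only blemishes are cosmetic: a sign flip in writing $\eta^{l}=-\nabla g(x^{l})\lambda^{l}$ with $\lambda^{l}\ge 0$ (the normal cone of $\{g\le 0\}$ is generated by $+\nabla g_i$ with nonnegative multipliers, as in the paper's (\ref{ConCQ1})), and some looseness about the frozen stretches of $\{\rho_l\}$ in the unbounded case of (i), which is precisely the step the paper also outsources to \cite{s64}.
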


\begin{proof}
  (i) We prove the statement (i) by considering two cases.
    \par (a) $\{\rho_l\}$ is bounded. According to update rule, one can know that (\ref{Uprho}) is updated for finite times. It implies that there exists $k_0$ such that for all $k\geq k_0$, (\ref{Conditionrho}) holds. Thus, $\lim_{l\rightarrow\infty}|\xi^l|=0$. By (\ref{Upxi}), one can see that $\xi^{l+1}\leq -h_{\sigma}(x^l)$ and thus $(h_{\sigma}(x^l))_+\leq(-\xi^{l+1})_+\leq|\xi^{l+1}|$, which yields that statement (i) holds.
  \par (b) $\{\rho_l\}$ is unbounded. Then, it follows from the proof of Theorem 3.1, (i) in \cite{s64} that
    \begin{equation}\label{murho0}
    \lim_{l\rightarrow\infty}\mu^l/\rho_l=0.
  \end{equation}
  \par Further, by the second relation in (\ref{stopping2}) and the definition of the AL function (\ref{Lfun}), we can show that
    \begin{equation*}
    \left[\frac{\mu^l}{\rho_l}+h_{\sigma}(x^l)\right]_+^2\leq\frac{2}{\rho_l}(\Upsilon-\Phi(x^l))+\left(\frac{\mu^l}{\rho_l}\right)^2,
  \end{equation*}
  which together with (\ref{murho0}), unbounded $\{\rho_l\}$ and the lower boundedness of $\{\Phi(x^l)\}$ implies that $[h_{\sigma}(x^l)]_+\rightarrow0$ as $l\rightarrow\infty$. This completes the proof of statement (i).
  \par (ii) Since $x^*$ is an accumulation point of $\{x^l\}$, it follows from the continuity of function $h_{\sigma}(x)$ and statement (i) that $[h_{\sigma}(x^*)]_+=0$, which yields $h_{\sigma}(x^*)\leq0$. Recall that $x^*\in\mathcal{F}$. Hence, $x^*$ is a feasible point of problem (\ref{Rvsig}).
  \par (iii) For any fixed $\sigma>0$, $x^*$ is an accumulation point of $\{x^l\}$, there exists a subsequence $\mathcal{K}$ such that $\{x^l\}_{\mathcal{K}}\rightarrow x^*$. Let $\mathcal{H}:=\{u\in\mathbb{R}^n: h_{\sigma}(u)=0\}$ and $\mathcal{H}^{-}:=\{u\in\mathbb{R}^n: h_{\sigma}(u)<0\}$. Next, we prove
  \begin{equation}\label{mu0sub}
    \{\textbf{1}_{\mathcal{H}^{-}}(x^*)\mu^{l+1}\}_{\mathcal{K}}\rightarrow0.
  \end{equation}
  From (ii), we know that $h_{\sigma}(x^*)\leq0$. If $h_{\sigma}(x^*)=0$, then (\ref{mu0sub}) holds automatically. Thus, in the following two cases, we only consider $h_{\sigma}(x^*)<0$.
  \par Case (a): $\{\rho_l\}$ is bounded. It is easy to know that $\xi^{l+1}\rightarrow0$ as $k\rightarrow\infty$. Since $\{x^l\}_{\mathcal{K}}\rightarrow x^*$, we have that $\textbf{1}_{\mathcal{H}^{-}}(x^*)h_{\sigma}(x^l)<\textbf{1}_{\mathcal{H}^{-}}(x^*)h_{\sigma}(x^*)/2<0$ for sufficiently large $l\in\mathcal{K}$. It follows from this and (\ref{Upxi}) that $\{\mu^{l+1}/\rho_l\}_{\mathcal{K}}\rightarrow0$, which together with the boundedness of $\{\rho_l\}$ yields (\ref{mu0sub}).
  \par Case (b): $\{\rho_l\}$ is unbounded. Recall from the proof of statement (i) that $\mu^l/\rho_l\rightarrow0$. From above, we know that $\textbf{1}_{\mathcal{H}^{-}}(x^*)h_{\sigma}(x^l)<\textbf{1}_{\mathcal{H}^{-}}(x^*)h_{\sigma}(x^*)/2<0$ for sufficiently large $l\in\mathcal{K}$. It follows from these and the relation of (\ref{Upmu}) that for sufficiently large $l\in\mathcal{K}$,
  \begin{equation*}
    \textbf{1}_{\mathcal{H}^{-}}(x^*)\mu^{l+1}=\textbf{1}_{\mathcal{H}^{-}}(x^*)\rho_l[\mu^l/\rho_l+h_{\sigma}(x^l)]_+=0,
  \end{equation*}
  and hence (\ref{mu0sub}) holds. For convenience, let
  \begin{equation*}
    \mathcal{I}_{g}(x):=\{i: g_i(x)=0\},\ \forall x\in\mathbb{R}^n.
  \end{equation*}
\par Since RCPLD holds at $x^*$ for system $g(x)\leq0$, it follows from Proposition 2.1 and 2.2 in \cite{s64} that there exists $\delta>0$ such that for any $x\in\mathcal{B}_{\delta}(x^*)$,
  \begin{equation}\label{ConCQ1}
       \mathcal{N}_{\mathcal{F}}(x)=\left\{\sum_{i\in\mathcal{I}_g(x)}\beta_i\nabla g_i(x): \beta_i\geq0, i\in\mathcal{I}_g(x)\right\}.
  \end{equation}
  Let $\mathcal{I}_g^l:=\mathcal{I}_g(x^l)$. By the definition of $\varphi$, (\ref{ConCQ1}), the first relation in (\ref{stopping2}) and Step (2) in Algorithm \ref{alg:B}, there exist $\beta^{l+1}\in\mathbb{R}^{|\mathcal{I}_g^l|}_+$, $\eta^l\in\mathbb{R}^n$ and $d^l=(d^l_1,\cdots,d^l_n)^T$ with $d^l_i\in\mathcal{D}(x^l_i)$, $i=1,\ldots,n$ such that
  \begin{equation}\label{ConCQ2}
\eta^l\in\partial\left(\sum_{i=1}^{n}\frac{|x^l_i|}{\nu}\right)-\sum_{i=1}^{n}\theta'_{d^l_i}(x^l_i)\boldsymbol{e}_i+\mu^{l+1}\nabla h_{\sigma}(x^l)+\sum_{i\in\mathcal{I}_g^l}\beta_i^{l+1}\nabla g(x^l)
  \end{equation}
  and $\|\eta^l\|\leq\epsilon_l$ for all $l$. It then follows from $\epsilon_l\rightarrow0$ that $\eta^l\rightarrow0$.
  \par Thus it follows from (\ref{ConCQ2}) that for every sufficiently large $l\in\mathcal{K}$, we have
  \begin{equation}\label{ConCQ3}
\tilde{\eta}^l\in\partial\left(\sum_{i=1}^{n}\frac{|x^l_i|}{\nu}\right)-\sum_{i=1}^{n}\theta'_{d^l_i}(x^l_i)\boldsymbol{e}_i+\mu^{l+1}\textbf{1}_{\mathcal{H}}(x^*)\nabla h_{\sigma}(x^l)+\sum_{i\in\mathcal{I}_g^l}\beta_i^{k+1}\nabla g_i(x^l),
  \end{equation}
  where $\tilde{\eta}^l:=\eta^l-\textbf{1}_{\mathcal{H}^{-}}(x^*)\mu^{l+1}\nabla h_{\sigma}(x^l)$. Taking limits on both sides of (\ref{ConCQ3}) as $\mathcal{K}\ni l\rightarrow\infty$, since the elements in $\mathcal{D}(x^*_i)$ for $i=1,\ldots,n$ are finite, it follows from Proposition 2.2 in \cite{s64} and the proof of Theorem 3.1, (iii) in \cite{s64} that there exist $\mu^*\in\mathbb{R}$, $\beta^*_i\geq0, i\in\mathcal{I}^*_g$ and $d^*=(d^*_1,\cdots,d^*_n)^T$ with $d^*_i\in\mathcal{D}(x^*_i)$, $i=1,\ldots,n$, such that
  \begin{equation}\label{liftedRvsig}
  \begin{split}
     0 & \in\partial\left(\sum_{i=1}^{n}\frac{|x^*_i|}{\nu}\right)-\sum_{i=1}^{n}\theta'_{d^*_i}(x^*_i)\boldsymbol{e}_i+\textbf{1}_{\mathcal{H}}(x^*)\mu^*\nabla h_{\sigma}(x^*)+\sum_{i\in\mathcal{I}_g^*}\beta_i^*\nabla g_i(x^*) \\
& \subseteq\partial\left(\sum_{i=1}^{n}\frac{|x^*_i|}{\nu}\right)-\sum_{i=1}^{n}\theta'_{d^*_i}(x^*_i)\boldsymbol{e}_i+\mathcal{N}_{\mathcal{S}_{\sigma}}(x^*).
  \end{split}
\end{equation}
  Together with the regularity of $\mathcal{S}_{\sigma}$ at $x^*$, (\ref{liftedRvsig}) implies that $x^*$ is a lifted stationary point of problem (\ref{Rvsig}).\qed
\end{proof}

In general, it is  difficult to find a point $x^l \in {\cal F} $  satisfying  (\ref{stopping2}) in  Algorithm \ref{alg:B}, due to the nonconvexity of problem ($P^{AL}$). For some special cases, we can use  some existing augmented Lagrangian algorithms to find such approximate stationary points. For example, see \cite{s64} and its references. In the next section, we will consider a special case  where functions $G$ and $H$ are affine functions in Proposition \ref{PSDset}.  We propose a DC algorithm to solve the AL subproblem (\ref{PAL}) efficiently. In particular, we can find a point satisfying (\ref{stopping2}) by solving strongly monotone subproblem (\ref{DCset}) in finite steps.

\section{Sparse solutions of VLCS}

Given matrices $A,C\in\mathbb{R}^{m\times n}$ and vectors $b,d\in\mathbb{R}^m$, a vertical linear complementarity system (VLCS) is to find $x\in\mathbb{R}^n$ such that
\begin{equation}\label{VLCP}
 Ax-b\geq0, Cx-d\geq0, (Ax-b)^T(Cx-d)=0.
\end{equation}
In this section, we consider the case where $G(x)=Ax-b$ and $H(x)=Cx-d$ which means $\mathcal{F}=\{x\in\mathbb{R}^n: Ax-b\geq0, Cx-d\geq0\}$ and
 $ \mathcal{S}=\{x\in\mathbb{R}^n:0\leq Ax-b\bot Cx-d\geq0\}.$
We consider the following sparse optimization problem:
\begin{equation}\label{NEP0}
  \begin{split}
     \min &\ \|x\|_0 \\
      {\rm s.t.} &\ 0\leq Ax-b\bot Cx-d\geq0.
  \end{split}
\end{equation}
In subsection 5.1, we give sufficient conditions for Assumption 1 to hold, and in subsection 5.2, we apply Algorithms \ref{alg:A} and \ref{alg:B} to find a sparse solution of VLCS. In subsection 5.3, we show that a class of  equilibrium problems in market modeling can be formulated as problem (\ref{NEP0}) and in subsection 5.4, we present numerical results for solving problem (\ref{NEP0}).

\subsection{Solvability of problem (\ref{NEP0})}

\par We say that problem (\ref{VLCP}) is feasible, if there exists an $\bar{x}\in\mathcal{F}$. And we say that problem (\ref{VLCP}) is solvable, if $\mathcal{S}$ is not empty. A matrix $M\in\mathbb{R}^{n\times n}$ is called column sufficient if $z_i(Mz)_i\leq0$ for all $i=1,2,\ldots,n$ implies $z_i(Mz)_i=0$ for all $i=1,2,\ldots,n$. The matrix $M$ is called row sufficient if its transpose is column sufficient. A matrix $M\in\mathbb{R}^{n\times n}$ is called copositive if $z^TMz\geq0$ for all $z\in\mathbb{R}^n_+$; and a matrix $M\in\mathbb{R}^{n\times n}$ is called copositive-plus if $M$ is copositive and $z^TMz=0, z\geq0$ imply $(M+M^T)z=0$.
\begin{lemma}\label{FeaSol}
  Assume that matrix $C$ has full row rank. Let $M:=AC^T(CC^T)^{-1}$.
  \par (i) If $M$ is row sufficient, and problem (\ref{VLCP}) is feasible, then it is solvable.
  \par (ii) If $M$ is copositive-plus and $$ \{z\in\mathbb{R}^m:(M+M^T)z=0\}\subseteq\{z\in\mathbb{R}^m:(Md-b)^Tz=0\}.$$ Then, problem (\ref{VLCP}) is solvable.
\end{lemma}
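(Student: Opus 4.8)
The plan is to reduce the vertical linear complementarity system \eqref{VLCP} to a standard linear complementarity problem (LCP) via the full-row-rank matrix $C$, and then invoke classical LCP existence theory (Cottle–Pang–Stone) for the classes ``row sufficient'' and ``copositive-plus.'' First I would use that $C$ has full row rank, so $CC^T$ is invertible and the map $y = Cx - d$ is surjective onto $\mathbb{R}^m$; more precisely, for any $y \in \mathbb{R}^m$ the set of $x$ with $Cx - d = y$ is nonempty, and one distinguished solution is $x = C^T(CC^T)^{-1}(y+d)$. Substituting into $Ax - b$ gives $Ax - b = AC^T(CC^T)^{-1}(y+d) - b = My + (Md - b)$, where $M = AC^T(CC^T)^{-1}$ and $q := Md - b$. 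Hence, modulo the nullspace of $C$, the feasible/solvable questions for \eqref{VLCP} are equivalent to the feasibility/solvability of the LCP$(q,M)$: find $y \geq 0$ with $My + q \geq 0$ and $y^T(My+q) = 0$. A solution $y^*$ of this LCP yields $x^* = C^T(CC^T)^{-1}(y^*+d) \in \mathcal{S}$, and conversely any $x \in \mathcal{F}$ (resp. $\mathcal{S}$) produces $y = Cx - d \geq 0$ feasible (resp. solving) the LCP. I would state this equivalence as the first lemma-internal claim and verify it by direct substitution.

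For part (i): once the equivalence is in place, feasibility of \eqref{VLCP} gives feasibility of LCP$(q,M)$, i.e. the existence of $y \geq 0$ with $My + q \geq 0$. The relevant classical theorem (Cottle, Pang, Stone, \emph{The Linear Complementarity Problem}, Theorem 3.1.7 / the row-sufficient existence result) states that if $M$ is row sufficient and LCP$(q,M)$ is feasible, then it is solvable. I would cite this and conclude $\mathcal{S} \neq \emptyset$ via the back-substitution above. The only care needed is to confirm the paper's convention: ``row sufficient $=$ transpose is column sufficient,'' which matches the CPS convention, so the cited theorem applies verbatim.

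For part (ii): here feasibility is not assumed, so I would appeal to the copositive-plus existence theory. The classical statement (CPS Theorem 3.8.6, the copositive-plus existence theorem, essentially Lemke's theorem) is: if $M$ is copositive-plus, then LCP$(q,M)$ is solvable if and only if it is feasible; and feasibility is guaranteed under the side condition that every $z$ in the relevant recession/nullspace cone satisfies $q^T z \geq 0$. The hypothesis $\{z : (M+M^T)z = 0\} \subseteq \{z : q^T z = 0\}$ with $q = Md - b$ is exactly the condition that rules out the ``ray termination'' alternative in Lemke's algorithm for copositive-plus matrices: on the cone where a secondary ray could occur one has $Mz = -M^Tz$ hence $z^TMz = 0$, so $z$ lies in the stated nullspace and therefore $q^Tz = 0$, which forces the algorithm to terminate with a solution rather than an unbounded ray. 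I would spell this out: suppose for contradiction Lemke's method produces a secondary ray $(y^0 + \lambda z, \lambda \to \infty)$ with $0 \neq z \geq 0$, $Mz \geq 0$, $z^T(Mz) = 0$; copositive-plusness gives $(M+M^T)z = 0$, the inclusion gives $q^Tz = 0$, and then a short computation (as in the CPS proof) shows the ray cannot be secondary, a contradiction; hence the algorithm returns a solution, so $\mathcal{S} \neq \emptyset$.

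The main obstacle I anticipate is not any single deep step but getting the bookkeeping of the reduction exactly right: verifying that the nullspace-of-$C$ ambiguity in lifting $y$ back to $x$ is harmless (it is, since any choice of preimage works and $\mathcal{S}$ is defined purely through $Ax-b$ and $Cx-d$), and making sure the sign/transpose conventions in ``row sufficient'' and in the copositive-plus recession condition line up with the cited LCP theorems. The genuinely substantive content is entirely imported from classical LCP existence theory; the lemma's novelty is the observation that full row rank of $C$ collapses the vertical system to an ordinary LCP with data $(Md-b,\ AC^T(CC^T)^{-1})$.
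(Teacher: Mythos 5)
Your part (ii) is essentially sound: you reduce to the standard LCP with data $(q,M)$, $q=Md-b$, lift a solution $\bar y$ back via $\tilde x=C^T(CC^T)^{-1}(\bar y+d)$, and invoke the classical copositive-plus existence theory. The paper uses the same LCP and the same lift, but instead of Lemke/ray-termination it shows the quadratic form $y^T(My+q)$ is bounded below on $\mathbb{R}^m_+$ (using copositive-plusness and the nullspace inclusion) and cites the corresponding Frank--Wolfe-type solvability result; both routes are legitimate.

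Part (i), however, has a genuine gap. Your reduction claims that feasibility of (\ref{VLCP}) gives feasibility of ${\rm LCP}(Md-b,M)$ via $y=Cx-d$. It does not: for $x\in\mathcal{F}$ one has
\begin{equation*}
M(Cx-d)+(Md-b)=MCx-b=A\,C^T(CC^T)^{-1}C\,x-b=Ax-b-A(I-P)x,\qquad P:=C^T(CC^T)^{-1}C,
\end{equation*}
so the quantity you need to be nonnegative differs from $Ax-b$ by $A(I-P)x$, which is nonzero whenever $x$ has a component in $\ker C$ not annihilated by $A$. The point is that $Ax-b$ is not a function of $y=Cx-d$ alone, so (\ref{VLCP}) is not equivalent to a single LCP; it is a family of LCPs indexed by the $\ker C$-component of $x$. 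A concrete counterexample: $C=(1,0)$, $A=(0,1)$, $d=0$, $b=1$ gives $M=0$ (row sufficient), the VLCS is feasible and solvable (e.g.\ $x=(0,1)^T$), yet ${\rm LCP}(Md-b,M)={\rm LCP}(-1,0)$ is infeasible, so your argument cannot close. The paper repairs exactly this: given a feasible $x_0$, it forms the LCP with constant vector $q_0=Md-MCx_0+Ax_0-b$, for which $y=Cx_0-d$ \emph{is} feasible (since $My+q_0=Ax_0-b\geq 0$), applies the row-sufficient existence theorem to that LCP, and lifts its solution $y_0$ back with the $\ker C$-component of $x_0$ restored, namely $\bar x=C^T(CC^T)^{-1}(y_0+d)+(I-P)x_0$. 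With that modification your strategy for (i) goes through.
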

\begin{proof}
  (i) Let $x_0\in\mathcal{F}$. Consider the following LCP: find $y\in\mathbb{R}^m$ such that
  \begin{equation}\label{ProofFeaLCP}
0\leq y\bot My+Md-MCx_0+Ax_0-b\geq0.
\end{equation}
Then, it is easy to verify that $Cx_0-d$ is a feasible vector of LCP (\ref{ProofFeaLCP}). Since matrix $M$ is row sufficient, due to Theorem 3.5.4 in \cite{s1}, we know the LCP (\ref{ProofFeaLCP}) is solvable, which implies that there exists a vector $y_0\in\mathbb{R}^m$ satisfying (\ref{ProofFeaLCP}).
\par Let $\bar{x}=C^T(CC^T)^{-1}(y_0+d)+(I_n-C^T(CC^T)^{-1}C)x_0$. Then, we have
\begin{equation*}
  \begin{split}
      & C\bar{x}-d=y_0, \\
      & A\bar{x}-b=My_0+Md-MCx_0+Ax_0-b,
  \end{split}
\end{equation*}
which imply that $\bar{x}$ is solution of problem (\ref{VLCP}).
\par (ii) Consider the following LCP: find $y\in\mathbb{R}^m$ such that
  \begin{equation}\label{ProofFeaLCPyii}
0\leq y\bot My+Md-b\geq0.
\end{equation}
\par Define function $f(y):=y^T(My+Md-b), y\in\mathbb{R}^m$. Consider a set $\mathcal{W}=(\mathbb{R}^n_+\setminus\{z\in\mathbb{R}^m:(M+M^T)z=0\})\cup\{0\}$. Then,
\begin{equation*}
  \begin{split}
     &\inf_{y\in\mathbb{R}^m_+} f(y)\\
      =& \inf_{y\in\mathcal{W}} f(y) =\inf_{y\in\mathcal{W}} \frac{1}{2}y^T(M^T+M)y+y^T(Md-b) \\
       =&\inf_{y\in\mathcal{W},\|y\|=1,\lambda\geq0} \frac{1}{2}\lambda^2y^T(M^T+M)y+\lambda y^T(Md-b) \\
      \geq & \inf_{\lambda\geq0} \left\{ \lambda^2 \left\{ \min_{y\in\mathcal{W},\|y\|=1}\frac{1}{2}y^T(M^T+M)y\right\}+\lambda\left\{ \min_{y\in\mathcal{W},\|y\|=1} y^T(Md-b)\right\} \right\},\\
  \end{split}
\end{equation*}
where the first equality holds because $0=f(0)\leq f(y)$ when $M$ is copositive-plus and $y\in \{z\in\mathbb{R}^m:(M+M^T)z=0\}\subseteq\{z\in\mathbb{R}^m:(Md-b)^Tz=0\}.$
\par Since matrix $M$ is copositive-plus and $\mathcal{W}\cap\{z\in\mathbb{R}^m:(M+M^T)z=0\}=\{0\}$, the quantity
\begin{equation*}
  \alpha:=\min_{y\in\mathcal{W},\|y\|=1}\frac{1}{2}y^T(M^T+M)y
\end{equation*} is strictly positive. Let $\beta:=\min_{y\in\mathcal{W},\|y\|=1} y^T(Md-b)$. Then, we know
\begin{equation*}
  \inf_{y\in\mathbb{R}^m_+}f(y)\geq\inf_{\lambda\in\mathbb{R}}\alpha\lambda^2+\beta\lambda=-\frac{\beta^2}{4\alpha},
\end{equation*}
which implies that $f$ is bounded below for $y\geq0$. Then according to Corollary 3.7.12 in \cite{s1}, we know that the LCP (\ref{ProofFeaLCPyii}) has a solution and denote a solution by $\bar{y}$.
Let $\tilde{x}=C^T(CC^T)^{-1}(\bar{y}+d)$. From
\begin{equation*}
  \begin{split}
      & C\tilde{x}-d=\bar{y}\geq0, \\
      & A\tilde{x}-b=M\bar{y}+Md-b\geq0, \\
      & (A\tilde{x}-b)^T(C\tilde{x}-d)=\bar{y}^T(M\bar{y}+Md-b)=0,
  \end{split}
\end{equation*}
we know that $\tilde{x}$ is a solution of problem (\ref{VLCP}). Thus, under the assumptions in (ii), we obtain that problem (\ref{VLCP}) has a solution.\qed
\end{proof}
\par It follows from Theorem 6.9 in \cite{s4} that the set $\mathcal{S}$ is regular at any feasible point when $\mathcal{S}$ is convex. Since regularity is required in some previous results, here we give a sufficient condition for the convexity of set $\mathcal{S}$.
\begin{proposition}\label{PSDset}
  If $A^TC$ is positive semi-definite and problem (\ref{VLCP}) has a solution $x^*$, then the set $\mathcal{S}$ equals to
    $$ \bar{\mathcal{S}}:=\left\{
\begin{aligned}
 &  Ax-b\geq0,\ Cx-d\geq0,  \\
x\in\mathbb{R}^n:\   &(A^TC+C^TA)(x-x^*)=0,\\
&(b^TC+d^TA)(x-x^*)=0
\end{aligned}
\right\}.
$$
\end{proposition}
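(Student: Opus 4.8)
The plan is to prove the two set inclusions $\mathcal{S}\subseteq\bar{\mathcal{S}}$ and $\bar{\mathcal{S}}\subseteq\mathcal{S}$ separately. For the inclusion $\mathcal{S}\subseteq\bar{\mathcal{S}}$, take an arbitrary $x\in\mathcal{S}$. Both $x$ and $x^*$ satisfy the complementarity system, so $(Ax-b)^T(Cx-d)=0$ and $(Ax^*-b)^T(Cx^*-d)=0$, while the cross terms $(Ax-b)^T(Cx^*-d)\ge 0$ and $(Ax^*-b)^T(Cx-d)\ge 0$ are nonnegative because each factor is a nonnegative vector. First I would expand the quadratic form $(x-x^*)^T A^TC (x-x^*)$ and regroup it as $(Ax-Ax^*)^T(Cx-Cx^*)=(Ax-b)^T(Cx-d)+(Ax^*-b)^T(Cx^*-d)-(Ax-b)^T(Cx^*-d)-(Ax^*-b)^T(Cx-d)$; the first two terms vanish and the last two are $\le 0$, so $(x-x^*)^T A^TC (x-x^*)\le 0$. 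Since $A^TC$ is positive semi-definite, this forces $(x-x^*)^T A^TC (x-x^*)=0$, and then the standard fact that a PSD quadratic form vanishing at a point makes that point a minimizer of the form gives $(A^TC+C^TA)(x-x^*)=0$. For the last defining equation of $\bar{\mathcal{S}}$, I would observe that $(b^TC+d^TA)(x-x^*)$ can be rewritten using $b=Ax^*-(Ax^*-b)$ and $d=Cx^*-(Cx^*-d)$ together with the relation $(A^TC+C^TA)(x-x^*)=0$, reducing it to a combination of the cross terms $(Ax-b)^T(Cx^*-d)$ and $(Ax^*-b)^T(Cx-d)$, each of which I can show is zero: each is nonnegative by feasibility, and their sum is zero because $(x-x^*)^TA^TC(x-x^*)=0$ forces the two nonnegative cross terms to cancel the two zero complementarity terms, hence each cross term individually is $0$.

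For the reverse inclusion $\bar{\mathcal{S}}\subseteq\mathcal{S}$, take $x\in\bar{\mathcal{S}}$. The inequality constraints $Ax-b\ge 0$ and $Cx-d\ge 0$ are built into $\bar{\mathcal{S}}$, so it remains to show $(Ax-b)^T(Cx-d)=0$. I would expand $(Ax-b)^T(Cx-d)$ by writing $Ax=Ax^*+A(x-x^*)$ and $Cx=Cx^*+C(x-x^*)$, obtaining $(Ax^*-b)^T(Cx^*-d)$ plus terms linear and quadratic in $x-x^*$. The term $(Ax^*-b)^T(Cx^*-d)=0$ since $x^*\in\mathcal{S}$; the quadratic term $(x-x^*)^TA^TC(x-x^*)$ equals $\tfrac12(x-x^*)^T(A^TC+C^TA)(x-x^*)=0$ using the second defining equation of $\bar{\mathcal{S}}$; and the two cross terms $(Ax^*-b)^TC(x-x^*)$ and $(x-x^*)^TA^T(Cx^*-d)$ I would combine and relate to $(b^TC+d^TA)(x-x^*)$ using again $(A^TC+C^TA)(x-x^*)=0$, so that the third defining equation of $\bar{\mathcal{S}}$ makes them vanish. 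Putting these together yields $(Ax-b)^T(Cx-d)=0$, hence $x\in\mathcal{S}$.

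The main obstacle I anticipate is the careful bookkeeping in handling the linear (cross) terms: showing that $(b^TC+d^TA)(x-x^*)=0$ is exactly the right quantity to kill the cross terms in both directions. The clean way is to note the algebraic identity $(Ax-b)^T(Cx-d)=(x-x^*)^T A^TC(x-x^*) + \big[(Ax^*-b)^TC+(Cx^*-d)^TA\big](x-x^*)$, and then rewrite the bracketed linear functional: since $(Ax^*-b)^TC(x-x^*)+(Cx^*-d)^TA(x-x^*)=(x^*)^T(A^TC+C^TA)(x-x^*)-(b^TC+d^TA)(x-x^*)$ and $(A^TC+C^TA)(x-x^*)=0$ on $\bar{\mathcal{S}}$, this linear part equals $-(b^TC+d^TA)(x-x^*)=0$. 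So the identity collapses to $(Ax-b)^T(Cx-d)=(x-x^*)^TA^TC(x-x^*)=\tfrac12(x-x^*)^T(A^TC+C^TA)(x-x^*)=0$, which handles $\bar{\mathcal{S}}\subseteq\mathcal{S}$ in one stroke; running the same identity backwards, using PSD-ness of $A^TC$ and nonnegativity of the cross terms $(Ax-b)^T(Cx^*-d),(Ax^*-b)^T(Cx-d)\ge 0$, gives $\mathcal{S}\subseteq\bar{\mathcal{S}}$. I would present the single identity first and then read off both inclusions from it.
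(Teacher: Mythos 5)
Your proposal is correct and follows essentially the same route as the paper's proof: expand the quadratic form $(x-x^*)^TA^TC(x-x^*)$ into the two (vanishing) complementarity products and the two nonnegative cross terms to force it to zero, invoke positive semi-definiteness to get $(A^TC+C^TA)(x-x^*)=0$, and then use the resulting equality $x^TA^TCx=x^{*T}A^TCx^*$ together with $(Ax-b)^T(Cx-d)=(Ax^*-b)^T(Cx^*-d)=0$ to handle the linear term $(b^TC+d^TA)(x-x^*)$ in both directions. Your packaging of the linear-term bookkeeping into a single reusable identity is a slightly cleaner presentation of the same algebra, not a different argument.
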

\begin{proof}
  Let $z^1$ and $z^2$ be arbitrary two solutions of problem (\ref{VLCP}). Since matrix $A^TC$ is positive semi-definite, we have
  \begin{equation*}
    \begin{split}
       0 & \leq(z^1-z^2)^TA^TC(z^1-z^2) \\
         & =(Az^1-b-(Az^2-b))^T(Cz^1-d-(Cz^2-d)) \\
         & =-(Az^1-b)^T(Cz^2-d)-(Az^2-b)^T(Cz^1-d) \\
         & \leq0.
    \end{split}
  \end{equation*}
  Thus, for any two solutions $z^1$ and $z^2$, we have
  \begin{equation}\label{wuniq}
    (Az^1-b)^T(Cz^2-d)=(Az^2-b)^T(Cz^1-d)=0,
  \end{equation}
  and
  \begin{equation}\label{AC0}
    (z^1-z^2)^TA^TC(z^1-z^2)=0.
  \end{equation}
  \par Let $z$ be an arbitrary solution. By (\ref{AC0}), we know that $0=(z-x^*)^TA^TC(z-x^*)=\frac{1}{2}(z-x^*)^T(A^TC+C^TA)(z-x^*)$. Since $A^TC$ is positive semi-definite, the gradient of the quadratic function satisfies $(A^TC+C^TA)(z-x^*)=0$. Thus, we have
  \begin{equation*}
    \begin{split}
        & z^T(A^TC+C^TA)z=z^T(A^TC+C^TA)x^*, \\
        & x^{*T}(A^TC+C^TA)x^*=x^{*T}(A^TC+C^TA)z.
    \end{split}
  \end{equation*}
  The above two equalities imply that $z^TA^TCz=x^{*T}A^TCx^*$. Meanwhile, we know
  \begin{equation*}
    0=(Az-b)^T(Cz-d)=(Ax^*-b)^T(Cx^*-d).
  \end{equation*}
  Hence, $(b^TC+d^TA)(z-x^*)=0$ and $z\in\bar{\mathcal{S}}$.
  \par Conversely, suppose that $z\in\bar{\mathcal{S}}$. From $(A^TC+C^TA)(z-x^*)=0$, it follows that $z^TA^TCz=x^{*T}A^TCx^*$ just by the argument we used above. Since $(b^TC+d^TA)(z-x^*)=0$, we have
  \begin{equation*}
    (Az-b)^T(Cz-d)=(Ax^*-b)^T(Cx^*-d)=0.
  \end{equation*}
The proof is completed.\qed
\end{proof}

\begin{remark}
 Under assumptions of Proposition \ref{PSDset},  the feasible set $\mathcal{S}$ of problem (\ref{NEP0}) is convex, which implies $\mathcal{S}$ is regular at any point $x\in\mathcal{S}$. Moreover, if $A^TC$ is positive definite, then there exists a unique solution of problem (\ref{VLCP}).
\end{remark}

\subsection{DC algorithm for subproblem (\ref{PAL})}
In this subsection, we discuss how to find an approximate stationary point $x^l$ of the $l$th AL subproblem (\ref{PAL}) satisfying (\ref{stopping2}) as required in Step (1) of Algorithm \ref{alg:B} for sparse optimization problem (\ref{NEP0}).
\par Under assumptions of Proposition \ref{PSDset}, we know that $\varphi(\cdot,\mu,\rho)$ is a convex function with respect to $x$. Recall that $\mathcal{F}=\{x\in\mathbb{R}^n: Ax-b\geq0, Cx-d\geq0\}$ for VLCS and  $\theta_1(t)=0$, $\theta_2(t)=t/\nu-1$ and $\theta_3(t)=-t/\nu-1$ are linear functions. Thus, problem (\ref{PAL}) with fixed parameters $\mu$ and $\rho$ can be rewritten as the following DC programming problem:
\begin{equation}\label{PALDC}
\begin{split}
\min_{x\in\mathcal{F}}\ & f_1(x)-f_2(x),
\end{split}
\end{equation}
where $f_1(x)=\sum_{i=1}^{n}\frac{|x_i|}{\nu}+\varphi(x,\mu,\rho)$, $f_2(x)=\sum_{i=1}^{n}\max\{\theta_1(x_i),\theta_2(x_i),\theta_3(x_i)\}$ are two convex functions. Define function $\Psi(x)=f_1(x)-f_2(x), x\in\mathbb{R}^n$. It is easy to see that $f_2$ is directionally differentiable.
\par For any $x\in\mathbb{R}^n$, let
$$ d^x_i=\left\{
             \begin{array}{rcl}
             &1, & \ |x_i|<\nu,  \\
             &2, & \ x_i\geq\nu,\\
             &3,& \ x_i\leq-\nu.
             \end{array}
\right.$$
\par We now propose a DC algorithm \cite{RecentDC} to find an approximate stationary point of AL subproblem (\ref{PAL}) satisfying (\ref{stopping2}).
\begin{algorithm}[H]
\caption{DC Algorithm}
\label{alg:C}
\begin{algorithmic}
\normalsize
\STATE {Choose $x^0\in\mathcal{F}$ and set $\jmath = 0$.
\par (1) Let $d^{\jmath}:=d^{x^{\jmath}}$, and
\begin{equation}\label{DCset}
  \begin{split}
       x^{\jmath+1}=&\argmin_{y\in\mathcal{F}}  f_1(y)-\sum_{i=1}^n\theta_{d^{\jmath}_i}(x^{\jmath}_i)-\sum_{i=1}^{n}\theta'_{d^{\jmath}_i}(x^{\jmath}_i)(y_i-x^{\jmath}_i)+\frac{1}{2}\|y-x^{\jmath}\|^2.
  \end{split}
\end{equation}
(2) Set ${\jmath}\leftarrow {\jmath}+1$ and go to Step (1).}\\
\textbf{END}
\end{algorithmic}
\end{algorithm}
Note that in  subproblem (\ref{DCset}), the objective function is strongly convex and the constraints are linear. Hence there exists a unique solution to subproblem (\ref{DCset}) and Algorithm \ref{alg:C} is well-defined under assumptions of Proposition \ref{PSDset}.
\begin{theorem}
  Suppose that $x^0\in\mathcal{F}$ and $L(x^0):=\{x\in\mathcal{F}:\Psi(x)\leq\Psi(x^0)\}$ is bounded. Then the sequence $\{x^{\jmath}\}$ generated by Algorithm \ref{alg:C} with starting point $x^0$ is bounded and any accumulation point is a lifted stationary point of problem (\ref{PALDC}).
\end{theorem}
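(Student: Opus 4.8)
\emph{Overall plan.} The plan is to carry out the standard convergence analysis of a proximal difference-of-convex (DC) scheme, tailored to the piece-selection rule $d^{\jmath}:=d^{x^{\jmath}}$ used in (\ref{DCset}). The four ingredients will be: (a) a majorization/sufficient-decrease inequality for $\Psi$; (b) boundedness of $\{x^{\jmath}\}$ from the level-set hypothesis; (c) asymptotic regularity $\|x^{\jmath+1}-x^{\jmath}\|\to 0$; and (d) a limiting argument in the first-order optimality condition of subproblem (\ref{DCset}). Throughout I use the standing assumptions of this subsection (those of Proposition \ref{PSDset}), under which $\varphi(\cdot,\mu,\rho)$ is convex and $C^{1}$, $f_{1}$ is convex, $\mathcal F$ is a polyhedron, and (\ref{DCset}) has a unique minimizer.

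\emph{Step 1: sufficient decrease.} I would write $m^{\jmath}(y):=f_{1}(y)-\sum_{i=1}^{n}\theta_{d^{\jmath}_{i}}(x^{\jmath}_{i})-\sum_{i=1}^{n}\theta'_{d^{\jmath}_{i}}(x^{\jmath}_{i})(y_{i}-x^{\jmath}_{i})+\tfrac12\|y-x^{\jmath}\|^{2}$, so that $x^{\jmath+1}=\argmin_{y\in\mathcal F}m^{\jmath}(y)$. Since $d^{\jmath}_{i}\in\mathcal D(x^{\jmath}_{i})$ by the definition of $d^{x^{\jmath}}$, one has $\sum_{i}\theta_{d^{\jmath}_{i}}(x^{\jmath}_{i})=f_{2}(x^{\jmath})$, and because each $\theta_{j}$ is affine while $f_{2}(y)=\sum_{i}\max\{\theta_{1}(y_{i}),\theta_{2}(y_{i}),\theta_{3}(y_{i})\}$, the affine map $y\mapsto\sum_{i}[\theta_{d^{\jmath}_{i}}(x^{\jmath}_{i})+\theta'_{d^{\jmath}_{i}}(x^{\jmath}_{i})(y_{i}-x^{\jmath}_{i})]$ lies below $f_{2}$ on all of $\mathbb R^{n}$. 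Hence $m^{\jmath}(y)\ge\Psi(y)+\tfrac12\|y-x^{\jmath}\|^{2}$ for every $y$, while $m^{\jmath}(x^{\jmath})=\Psi(x^{\jmath})$. As $m^{\jmath}$ is $1$-strongly convex and $x^{\jmath}\in\mathcal F$, optimality of $x^{\jmath+1}$ over the convex set $\mathcal F$ gives $m^{\jmath}(x^{\jmath})\ge m^{\jmath}(x^{\jmath+1})+\tfrac12\|x^{\jmath+1}-x^{\jmath}\|^{2}$, and chaining the three estimates yields $\Psi(x^{\jmath+1})\le\Psi(x^{\jmath})-\|x^{\jmath+1}-x^{\jmath}\|^{2}$.

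\emph{Steps 2--3: boundedness, asymptotic regularity, and the limit.} From Step 1, $\{\Psi(x^{\jmath})\}$ is nonincreasing, so $x^{\jmath}\in L(x^{0})$ for all $\jmath$, and boundedness of $L(x^{0})$ gives boundedness of $\{x^{\jmath}\}$. Since $\Psi(x)=\Phi(x)+\varphi(x,\mu,\rho)=\mathcal L(x,\mu,\rho)\ge-\mu^{2}/(2\rho)$ is bounded below, telescoping Step 1 gives $\sum_{\jmath}\|x^{\jmath+1}-x^{\jmath}\|^{2}<\infty$, hence $\|x^{\jmath+1}-x^{\jmath}\|\to0$. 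Given a subsequence $x^{\jmath_{k}}\to x^{*}$, it follows that $x^{\jmath_{k}+1}\to x^{*}$ as well; after a further extraction the finitely-valued vector $d^{\jmath_{k}}$ stabilizes to some $\bar d$, and passing to the limit in $\theta_{\bar d_{i}}(x^{\jmath_{k}}_{i})=\max_{j}\theta_{j}(x^{\jmath_{k}}_{i})$ (continuity of the $\theta_{j}$ and of the pointwise maximum) shows $\bar d_{i}\in\mathcal D(x^{*}_{i})$. Because $\mathcal F$ is a polyhedron and $m^{\jmath_{k}}$ is convex, $x^{\jmath_{k}+1}$ satisfies $0\in\partial f_{1}(x^{\jmath_{k}+1})-\sum_{i}\theta'_{\bar d_{i}}(x^{*}_{i})\boldsymbol e_{i}+(x^{\jmath_{k}+1}-x^{\jmath_{k}})+\mathcal N_{\mathcal F}(x^{\jmath_{k}+1})$, using that each $\theta'_{j}$ is constant. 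Here $\partial f_{1}(x)=\partial(\sum_{i}|x_{i}|/\nu)+\nabla_{x}\varphi(x,\mu,\rho)$, the first term being uniformly bounded with closed graph, $\nabla_{x}\varphi$ continuous, and the polyhedral normal-cone map $\mathcal N_{\mathcal F}(\cdot)$ outer semicontinuous. Extracting once more so the chosen subgradient of $\sum_{i}|x_{i}|/\nu$ converges and letting $k\to\infty$ (with $\|x^{\jmath_{k}+1}-x^{\jmath_{k}}\|\to0$) gives $0\in\partial f_{1}(x^{*})-\sum_{i}\theta'_{\bar d_{i}}(x^{*}_{i})\boldsymbol e_{i}+\mathcal N_{\mathcal F}(x^{*})$ with $\bar d_{i}\in\mathcal D(x^{*}_{i})$. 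Since $\mathcal F$ is convex it is regular at $x^{*}$, so this inclusion — which is exactly (\ref{stopping2}) at accuracy $0$ — says $x^{*}$ is a lifted stationary point of (\ref{PALDC}).

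\emph{Main obstacle.} I expect the delicate point to be the limiting behaviour of the combinatorial selection $d^{\jmath}$: one must know that the rule defining $d^{x}$ is ``closed into'' $\mathcal D(\cdot)$, which I would settle by the continuity-of-maximum argument in Step 2 rather than by a case analysis at $|x^{*}_{i}|=\nu$. A secondary technicality is justifying the subdifferential sum rule for $f_{1}$ and the exactness of the KKT condition for the strongly convex subproblem over $\mathcal F$; both are routine given the $C^{1}$-smoothness of $\varphi(\cdot,\mu,\rho)$ and the polyhedrality of $\mathcal F$. The remaining pieces (telescoping, subsequence extraction, outer semicontinuity of polyhedral normal cones, closed graph of the convex subdifferential) are standard.
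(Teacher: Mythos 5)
Your proposal is correct, and its skeleton (sufficient decrease via the affine minorant of $f_2$, confinement to the bounded level set, telescoping to get $\|x^{\jmath+1}-x^{\jmath}\|\to 0$, stabilization of the finitely-valued index vector $d^{\jmath}$ along a subsequence, and closedness of $\mathcal{D}(\cdot)$) matches the paper's proof. Where you diverge is in the final step: the paper never writes a first-order inclusion along the iterates; instead it passes to the limit directly in the minimization inequality $m^{\jmath}(x^{\jmath+1})\le m^{\jmath}(x)$ for all $x\in\mathcal{F}$, obtaining $\Psi(\bar x)\le f_1(x)-\sum_i\bigl(\theta_{d_i}(\bar x_i)+\theta'_{d_i}(\bar x_i)(x_i-\bar x_i)\bigr)+\frac12\|x-\bar x\|^2$ on $\mathcal{F}$, and then observes that $\bar x$ is a global minimizer of this strongly convex model (whose value at $\bar x$ is $\Psi(\bar x)$ because $d_i\in\mathcal{D}(\bar x_i)$), so that lifted stationarity follows from the exact optimality condition of a single convex program at $\bar x$. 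Your route instead takes limits in the KKT inclusion of each subproblem, which requires the extra (but correctly invoked) machinery of the subdifferential sum rule for $f_1$, the closed graph of $\partial(\sum_i|x_i|/\nu)$, and outer semicontinuity of $\mathcal{N}_{\mathcal{F}}(\cdot)$; the paper's variational argument avoids all of that and is slightly shorter, while your inclusion-based argument is more explicit about exactly which multipliers and subgradients survive in the limit and handles the selection $d^{\jmath_k}\to\bar d\in\mathcal{D}(x^*)$ more carefully than the paper's one-line appeal to finiteness of $\mathcal{D}$. Two harmless discrepancies: your decrease constant is $\|x^{\jmath+1}-x^{\jmath}\|^2$ (you additionally exploit strong convexity of $m^{\jmath}$) versus the paper's $\frac12\|x^{\jmath+1}-x^{\jmath}\|^2$, and outer semicontinuity of the normal cone map needs only convexity of $\mathcal{F}$, not polyhedrality.
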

\begin{proof}
   By the update rule of the algorithm, we have
  \begin{equation*}
    \begin{split}
       \Psi(x^{\jmath}) & = f_1(x^{\jmath})-\sum_{i=1}^{n}\theta_{d^{\jmath}_i}(x^{\jmath}_i)\\
         & \geq f_1(x^{\jmath+1})-\sum_{i=1}^{n}(\theta_{d^{\jmath}_i}(x^{\jmath}_i)+\theta'_{d^{\jmath}_i}(x^{\jmath}_i)(x^{\jmath+1}_i-x^{\jmath}))+\frac{1}{2}\|x^{\jmath+1}-x^{\jmath}\|^2 \\
         & \geq f_1(x^{\jmath+1})-\sum_{i=1}^{n}\theta_{d^{\jmath}_i}(x^{\jmath+1}_i)+\frac{1}{2}\|x^{\jmath+1}-x^{\jmath}\|^2\\
         &\geq f_1(x^{\jmath+1})-f_2(x^{\jmath+1})+\frac{1}{2}\|x^{\jmath+1}-x^{\jmath}\|^2\\
         &= \Psi(x^{\jmath+1})+\frac{1}{2}\|x^{\jmath+1}-x^{\jmath}\|^2,
    \end{split}
  \end{equation*}
  where the first inequality holds by the definition of $x^{\jmath+1}$, the second inequality holds by the convexity of $\theta_j, j=1,2,3$, the third inequality holds by the definition of $f_2(\cdot)$ and the last equality holds by the definition of $\Psi(\cdot)$. Hence, the sequence of objective values $\{\Psi(x^{\jmath})\}$ is nonincreasing, and strictly decreasing if $x^{\jmath+1}\neq x^{\jmath}$ for all $\jmath$. Since $\Psi(\cdot)$ is bounded below on $\mathcal{F}$, it follows that $\lim_{\jmath\rightarrow\infty}\Psi(x^{\jmath})$ exists and
  \begin{equation}\label{psilim}
    \lim_{\jmath\rightarrow\infty}[\Psi(x^{\jmath+1})-\Psi(x^{\jmath})]=\lim_{\jmath\rightarrow\infty}\|x^{\jmath+1}-x^{\jmath}\|=0.
  \end{equation}
  \par Since the sequence $\{x^{\jmath}\}$ is contained in the bounded level set $L(x^0)$ and $\mathcal{F}$ is closed and convex, the sequence has at least one accumulation point. Let $\{x^{\jmath}\}_{\jmath\in\mathcal{K}}$ be a subsequence converging to a limit $\bar{x}$, then $\bar{x}\in\mathcal{F}$ by the closedness of $\mathcal{F}$. Since the elements in $\mathcal{D}(\bar{x}_i)$ for $i=1,\ldots,n$ are finite, by the update rule of the algorithm, there exists $d=(d_1,\cdots,d_n)^T$ with $d_i\in\mathcal{D}(\bar{x}_i)$, $i=1,\ldots,n$, such that
  \begin{equation*}
    \begin{split}
       &\Psi(x^{\jmath+1})+\frac{1}{2}\|x^{\jmath+1}-x^{\jmath}\|^2\\
                \leq& f_1(x)-\sum_{i=1}^{n}(\theta_{d_i}(x^{\jmath}_i)+\theta'_{d_i}(x^{\jmath}_i)(x_i-x^{\jmath}_i))+\frac{1}{2}\|x-x^{\jmath}\|^2, \forall x\in\mathcal{F}.
    \end{split}
  \end{equation*}
  Taking the limits of both sides of above inequalities, it yields that there exists $d=(d_1,\cdots,d_n)^T$ with $d_i\in\mathcal{D}(\bar{x}_i)$, $i=1,\ldots,n$, such that
  \begin{equation*}
    \Psi(\bar{x})\leq f_1(x)-\sum_{i=1}^{n}(\theta_{d_i}(\bar{x}_i)+\theta'_{d_i}(\bar{x}_i)(x_i-\bar{x}_i))+\frac{1}{2}\|x-\bar{x}\|^2, \forall x\in\mathcal{F}.
  \end{equation*}
  The right side of above inequality is strongly convex on $\mathcal{F}$, which implies that $\bar{x}$ is a lifted stationary point of problem (\ref{PALDC}).\qed
  \end{proof}

  \begin{remark}
     In Algorithm \ref{alg:C}, if we follow Algorithm 1 in \cite{s68} where all $d_i\in\mathcal{D}(x^{\jmath}_i)$ for $i=1,\ldots,n$ have been taken into consideration, then the accumulation point of the sequence generated by Algorithm \ref{alg:C} can be proved to be a d-stationary point of problem (\ref{PALDC}). Furthermore, Algorithm \ref{alg:B} will converge to a d-stationary point of problem (\ref{Rvsig}) and Algorithm \ref{alg:A} will converge to an MPCC d-stationary point of problem (\ref{Rv}) with the same assumptions, respectively. However, for the convenience of computation, we only consider that there exists $d=(d_1,\cdots,d_n)^T$ with $d_i\in\mathcal{D}(x^{\jmath}_i)$, $i=1,\ldots,n$, such that $x^{\jmath+1}$ is a minimizer of (\ref{DCset}) in this paper.
   \end{remark}
To end this subsection, we use Figure \ref{figure2} to describe the relations between  Algorithms \ref{alg:A}, \ref{alg:B} and \ref{alg:C}.
\begin{figure}[H]
\centering
\tiny
\begin{framed}
\begin{tikzpicture}
  \matrix (m) [matrix of math nodes,row sep=1em,column sep=1.5em,minimum width=0.5em]
  {
       \text{Problem (\ref{Rv})} &\text{} & \text{Problem (\ref{Rvsig})} & \text{} & \text{Problem (\ref{PAL})} & \text{} & \text{Problem (\ref{DCset})} \\
       \text{} & \text{Alg. \ref{alg:A}} & \text{} & \text{Alg. \ref{alg:B}} & \text{} & \text{Alg. \ref{alg:C}} & \text{}\\};
  \path[-stealth]
    (m-1-1) edge [double,->] node [above] {{\tiny Relaxation}} (m-1-3)
    (m-1-3) edge [double,->] node [above] {{\tiny ALM}} (m-1-5)
    (m-1-5) edge [double,->] node [above] {{\tiny DCA}} (m-1-7)
            edge [double,->] node [below] {{\tiny Assump. Pro. \ref{PSDset}}} (m-1-7);
    \path[dotted,-,font=\scriptsize]
    (m-1-1) edge node {} (m-2-2)
    (m-2-2) edge node {} (m-1-3)
    (m-1-3) edge node {} (m-2-4)
    (m-2-4) edge node {} (m-1-5)
    (m-1-5) edge node {} (m-2-6)
    (m-2-6) edge node {} (m-1-7);
    \end{tikzpicture}
\leftline{Alg. \ref{alg:A} solves problem (\ref{Rv}) by updating relaxation parameter $\sigma$;}
\leftline{Alg. \ref{alg:B} solves problem (\ref{Rvsig}) with a fixed $\sigma$, where problem (\ref{PAL}) is its subproblem;} \leftline{Alg. \ref{alg:C} solves DC problem (\ref{PAL}) under assumptions of Proposition \ref{PSDset}, where problem (\ref{DCset}) is its subproblem. }
\end{framed}
\caption{The relations between Algorithms \ref{alg:A}, \ref{alg:B} and \ref{alg:C}}
\label{figure2}
\end{figure}
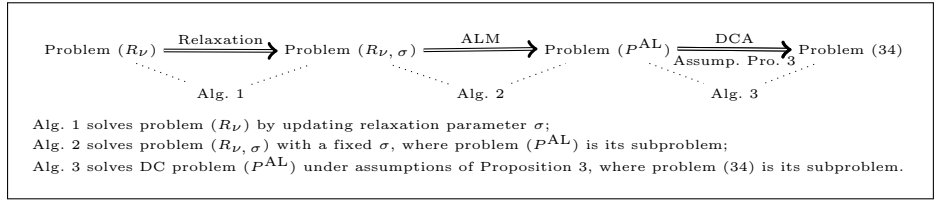

\subsection{Equilibrium problems in market modeling}
 We generalize the equilibrium problem in \cite{s58} and assume that there are $n_1$ producers and $m_1$ products. Let $c_i, a_i, b_i, f_i, s_i\in\mathbb{R}^{m_1}$, the $j$th, $j=1,2,\ldots,m_1$ elements of these vectors are the cost function, the no-production fixed cost, the linear production cost and the quantities sold by the producer in the futures and the spot markets with respect to product $j$ for producer $i$, respectively. Consider producer $i$ ($i=1,\ldots,n_1$) with the linear production cost 
\begin{equation*}
  c_{i}=a_{i}+{\rm diag}(b_{i})(s_{i}+f_{i}).
\end{equation*}
\par Consider a single price for both the spot and the futures markets, which depends linearly on the total production, i.e.,
\begin{equation*}
  \pi=\gamma-{\rm diag}(\beta)\sum_{i=1}^{n_1}(s_{i}+f_{i}),
\end{equation*}
where $\pi\in\mathbb{R}^{m_1}$ is the actual price, $\gamma\in\mathbb{R}^{m_1}_{++}$ is the no-demand price and $\beta\in\mathbb{R}^{m_1}_{++}$ is the price-demand slope of $m_1$ different products.
\par The profit maximization (minus profit minimization) problem of producer $i$ is
  \begin{equation}\label{produceri}
  \begin{split}
     \min_{s_{i},f_{i}}\ & -\pi^T\sum_{i=1}^{n_1}(s_{i}+f_{i})+e^Tc_{i}\\
       {\rm s.t.}\ & \pi=\gamma-{\rm diag}(\beta)\sum_{i=1}^{n_1}(s_{i}+f_{i}),\\
                   & {\rm diag}(w_{i}^s)s_{i}+{\rm diag}(w_{i}^f)f_{i}\leq \bar{q}_{i},
  \end{split}
\end{equation}
where $e\in\mathbb{R}^{m_1}$ is the vector whose all entries are one, $\bar{q}_{i}\in\mathbb{R}^{m_1}$ ($i=1,\ldots,n_1$) is a given bound imposed on $m_1$ products to be sold in the market (i.e., a capacity constraint) by producer $i$, and $w_{i}^s, w_{i}^f\in\mathbb{R}^{m_1}_{++}$ ($i=1,\ldots,n_1$) are given effective weight of quantities of $m_1$ products sold by producer $i$ in the spot market and future market, respectively.
\par For $n_1$ producers, we use KKT conditions to characterize the profit maximization problems for each producer. Thus, the whole equilibrium problem can be written as VLCS (\ref{VLCP}) where
\begin{equation*}
  A=(ee^T+I)\otimes B,\ C={\rm diag}(D_i),\ b=-\begin{pmatrix}
                                                            \gamma-b_1 \\
                                                            \gamma-b_2 \\
                                                             \vdots \\
                                                            \gamma-b_{n_1}
                                                           \end{pmatrix}, d=-\begin{pmatrix}
                                                             \bar{q}_1 \\
                                                             \bar{q}_2 \\
                                                             \vdots \\
                                                             \bar{q}_{n_1}
                                                           \end{pmatrix}
\end{equation*}
with
\begin{equation*}
 B_{lk}=\left\{
\begin{aligned}
 &  -\beta_l, && \ k=2l {\rm\ or\ }k=2l-1,\\
 &  0, && {\rm otherwise},
\end{aligned}
\right.
,   (D_i)_{lk}=\left\{
\begin{aligned}
 &  -w_{il}^s, && \ k=2l,\\
 &  -w_{il}^f, && \ k=2l-1,\\
 &  0, && {\rm otherwise},
\end{aligned}
\right.
\end{equation*}
for $k=1,\ldots,2m_1, l=1,\ldots,m_1, i=1,\ldots,n_1$. Here, “$\otimes$” denotes the Kronecker product of two matrices.

\subsection{Numerical experiments}

\par The AL method and DC algorithm are both coded in MATLAB and all computations are performed on a laptop (2.90 GHz, 32.0 GB RAM) with MATLAB R2020b. In the following tests, we set  $x^0\in\argmin_{x\in\mathcal{F}}(Ax-b)^\top(Cx-d)+\frac{1}{2}\|x\|_1$ and $\sigma_k=10^{-k}*\min\{(Ax^0-b)^\top (Cx^0-d),10^3\}$.
We terminate the AL algorithm with DC algorithm when the following conditions hold
$$
\|x^k-x^{k-1}\|\leq 10^{-8}, \,\, \|\min(Ax^k-b,Cx^k-d)\|_{\infty}\leq 10^{-3},
$$
$$\|(|x^k|-10^{-6}e)_+\|_0=\|(|x^{k-1}|-10^{-6}e)_+\|_0.$$

In our numerical test, instead of using a fixed $\nu$, we use an updating scheme
\begin{equation}\label{updating}
\tilde{\nu}_k=\max (1- \frac{k+1}{K}, \nu),  \,\,\,\, k=0,1,...,
\end{equation}
for $K=5, 15, 20, 30.$
Since $\tilde{\nu}_k=\nu$ after certain iterations,
the convergence results of Algorithms 1-3 still hold.  Specially, if $K=1$, then  $\tilde{\nu}_k =\nu$ for $k=0,1,\ldots $.
\subsubsection{Sparse solution of VLCS arising from optimization problem (\ref{produceri}) }
\par In our first experiment, let $n_1=5$, $m_1=10$, $\beta_l=\texttt{randi}([5,10])/10$ and $w_{il}^s=w_{il}^f=1$ for $l=1,\ldots,m_1,\ i=1,\ldots,n_1$ to generate $A$ and $C$. Since $ee^T+I $ and
$D^TB$ are positive semi-definite,  by \cite[Corollary 4.2.13]{Horn},
$A^TC=(ee^T+I)\otimes (D^TB)$ is positive semi-definite.
\par Let $x^{true}$ be a given vector.  Let $\mathcal{I}_G(x^{true})$ and $\mathcal{I}_H(x^{true})$ be two sets containing $s$ unique integers selected randomly from $\{1,2,\ldots,m\}$, respectively, and $\{1,2,\ldots,m\}\subseteq\mathcal{I}_G(x^{true})\cup\mathcal{I}_H(x^{true})$. Let $b_i=(Ax^{true})_i$ if $i\in \mathcal{I}_G(x^{true})$ and $b_i= (2Ax^{true})_i$ if  $i \not\in \mathcal{I}_G(x^{true})$. Similarly, let $d$ be a vector whose elements are equal to that of  $Cx^{true}$ for indices in $\mathcal{I}_H(x^{true})$ and equal to that of $2Cx^{true}$ for indices not in $\mathcal{I}_H(x^{true})$. We compare our algorithm with the $L_p$-minimization method proposed in \cite{s44}.
We use the active-set method implemented in KNITRO \cite{s66} with default settings to solve the following problem:
\begin{equation*}
  \begin{split}
     \min\ & \|x\|_p^p \\
       {\rm s.t.}\ & Ax-b\geq0,\ Cx-d\geq0,\ (Ax-b)^T(Cx-d)\leq\sigma_k.
  \end{split}
\end{equation*}
 \par Denote $x^*$ the solution obtained by our algorithm or the $L_p$-minimization method. “nnz” is the number of non-zero entries of $x^{true}$. “Res”$:=$ $\|\min(Ax^*-b,Cx^*-d)\|_{\infty}$. Denote $\mathcal{I}:=\{i: x^{true}_i=x^*_i=0, i=1,2,\ldots,n\}$. Then, let $s=30$ such that $|\mathcal{I}_G(x^{true})|=|\mathcal{I}_H(x^{true})|=30$ and $|\mathcal{I}_{00}(x^{true})|=10$. In the tests below, we consider the case with 5 producers and 10 products ($m=50,n=100$), and generate 10 random instances for each nnz. The computational results reported in Table \ref{part1:table1} are averaged over the 10 instances.
\begin{table}[H]
\footnotesize
\centering
\caption{Comparison of our algorithm and the $L_p$-minimization method with 5 producers, 10 products ($m=50$, $n=100$), $|\mathcal{I}_{00}(x^{true})|=10$ and $|\mathcal{I}_G(x^{true})|=|\mathcal{I}_H(x^{true})|=30$}
\begin{tabular}{c||c||c|c|c|c||c|c|c|c}
\hline
\multirow{2}{*}{nnz} & \multirow{2}{*}{} & \multicolumn{4}{c||}{$\nu(K=20)$}                             & \multicolumn{4}{c}{$p$}                         \\ \cline{3-10}
                                  &                   & 0.0004    & 0.004     & 0.04      & 0.1                & 0.1       & 0.3       & 0.5       & 1.0        \\ \hline
\multirow{3}{*}{30}               & $n-\|x^*\|_0$       & 50.1        & \textbf{50.3}        & 49.7          & 49.9   & 31.7        & 31.8        & 32.1       & 49.6         \\ 
                                  & $|\mathcal{I}|$             & 50.1       & \textbf{50.3}        & 49.7          &  49.9       & 31.1        & 31.6        & 29.4        & 49.6         \\ 
                                  & Res               & 3.8e-4 & 3.4e-4    & 2.8e-4  & 3.2e-4                   & 4.1e-4 & 5.8e-4 & 4.9e-4 & 6.7e-4  \\ \hline
\multirow{3}{*}{50}               & $n-\|x^*\|_0$       & \textbf{27.9}        &27.6      & 27.5        & 27.6                 & 16.8        & 16.2        & 26.5        & 21.6         \\ 
                                  & $|\mathcal{I}|$             & \textbf{27.9}       & 27.6        & 27.5       &27.6                &15.8        & 15.2        & 20        & 21.6         \\ 
                                  & Res               & 4.2e-4 & 3.0e-4 & 2.9e-4 & 2.5e-4          & 4.5e-4 & 4.5e-4 & 6.4e-4 & 6.8e-4 \\ \hline
\multirow{3}{*}{70}               & $n-\|x^*\|_0$       & 11.2       & 10.4        & 10.5        & 10.3 & 7.4        & 10.2        & \textbf{26.2}        & 8.8         \\ 
                                  & $|\mathcal{I}|$             & 11.2        & 10.4        & 10.5        & 10.3                   & 6.5        &8.4        & \textbf{12.8}        & 8.8         \\ 
                                  & Res           & 3.6e-4    & 3.3e-4    & 3.0e-4    &3.9e-4             & 4.1e-4 & 5.9e-4     & 5.5e-4 & 7.5e-4 \\ \hline
\end{tabular}
\label{part1:table1}
\end{table}

\subsubsection{Randomly generated problem (\ref{NEP0})}
In our second experiment, we consider randomly generated problem (\ref{NEP0}). Let components of matrix $A$ be generated by the uniform distribution in $(-20,20)$, $k$ be the largest integer smaller than or equal to $m/2$ and $B=\texttt{rand}(k,m)$. Let $s$ be the largest integer smaller than or equal to $k/3$ and $ind$ be a set containing $s$ unique couples selected randomly from $\{(1,1),(2,2),\ldots,(k,k)\}$. Let $M\in\mathbb{R}^{k\times k}$ be a diagonal matrix whose components in $ind$ are generated by the uniform distribution in $(0,1)$ and other components are equal to 0. Let $C=B^TMBA$.
\par Let $x^{true}$ be a given vector. Then, we use the same approach in the last subsection to generate vectors $b$ and $d$. Besides, when VLCS reduces to LCP, where $m=n$ and $A$ is an identity matrix, our random generated approach is consistent with that in \cite{s3,s44} for generating LCP.
\par In the tests below, let $|\mathcal{I}_G(x^{true})|=|\mathcal{I}_H(x^{true})|=30$ and $|\mathcal{I}_{00}(x^{true})|=10$. We consider the cases when $(m,n)=(50,100)$ and $(m,n)=(100,200)$, and generate 10 random instances for each case and each nnz. The computational results reported in Tables \ref{part1:table2} and \ref{part1:table3} are averaged over the 10 instances.

\begin{table}[H]
\footnotesize
\centering
\caption{Comparison of our algorithm and the $L_p$-minimization method with $m=50$, $n=100$, $|\mathcal{I}_{00}(x^{true})|=10$ and $|\mathcal{I}_G(x^{true})|=|\mathcal{I}_H(x^{true})|=30$}
\begin{tabular}{c||c||c|c|c|c||c|c|c|c}
\hline
\multirow{2}{*}{nnz} & \multirow{2}{*}{} & \multicolumn{4}{c||}{$\nu(K=20)$}                             & \multicolumn{4}{c}{$p$}                         \\ \cline{3-10}
                                  &                   & 0.0004    & 0.004     & 0.04      & 0.1                & 0.1       & 0.3       & 0.5       & 1.0        \\ \hline
\multirow{3}{*}{30}               & $n-\|x^*\|_0$       & \textbf{66}       & 62.7        & 61.5       & 64 & 22.1        & 19.6        & 27.7        & 40.4         \\ 
                                  & $|\mathcal{I}|$             & \textbf{58.5}       & 53.8        & 52.8        &     53.6         & 17.8       & 16       & 22.7        & 33.5        \\ 
                                  & Res               &  2.1e-4  & 5.2e-4    &4.2e-4    &4.2e-4                 & 2.2e-5 & 5.4e-6 & 8.5e-5 & 6.7e-7  \\ \hline
\multirow{3}{*}{50}               & $n-\|x^*\|_0$       & 63.2       & \textbf{63.8}       & 63       & 63.8                & 17.5        & 14.5        & 19.3        & 32.1        \\ 
                                  & $|\mathcal{I}|$             & 37.6       &\textbf{38.3}       & 37.6        &37.6                & 11        & 8.5       & 13        & 20         \\ 
                                  & Res               & 5.4e-4 & 3.7e-4 & 4.1e-4 & 3.5e-4          & 8.6e-7 & 1.4e-5 & 1.4e-4 & 2.2e-4 \\ \hline
\multirow{3}{*}{70}               & $n-\|x^*\|_0$       & 53.3       & \textbf{59.3}       & 53.7        & 56.8 & 11.6        & 12.9       & 23.6        & 33.4         \\ 
                                  & $|\mathcal{I}|$             & 20.6        &  \textbf{22.3}        &  20.8         &       21.6               &5        & 5.9       & 10.1        & 14.4         \\ 
                                  & Res               & 1.2e-4    & 5.1e-4    &2.6e-4    &9.1e-5    & 6.6e-6 & 1.6e-6     & 1.2e-10 & 1.9e-10 \\ \hline
\end{tabular}
\label{part1:table2}
\end{table}

\begin{table}[H]
\footnotesize
\centering
\caption{Comparison of our algorithm and the $L_p$-minimization method with $m=100$, $n=200$, $|\mathcal{I}_{00}(x^{true})|=20$ and $|\mathcal{I}_G(x^{true})|=|\mathcal{I}_H(x^{true})|=60$}
\begin{tabular}{c||c||c|c|c|c||c|c|c|c}
\hline
\multirow{2}{*}{nnz} & \multirow{2}{*}{} & \multicolumn{4}{c||}{$\nu (K=20)$}                             & \multicolumn{4}{c}{$p$}                         \\ \cline{3-10}
                                  &                   & 0.0004    & 0.004     & 0.04      & 0.1                & 0.1       & 0.3       & 0.5       & 1.0        \\ \hline
\multirow{3}{*}{60}               & $n-\|x^*\|_0$       &  135.8      & \textbf{137.6}       & 136.3        &135.8 & 33.5        & 38.1        & 56.3        & 62.3         \\ 
                                  & $|\mathcal{I}|$             & 112        &  \textbf{113}       & 112.5        &   112.0              & 27.5       & 31        & 45.7        & 50.3         \\ 
                                  & Res               & 8.6e-4 & 5.4e-4    & 2.8e-4    &5.3e-4                 & 2.6e-7 & 1.0e-5 & 2.5e-10 & 4.6e-6  \\ \hline
\multirow{3}{*}{100}               & $n-\|x^*\|_0$       & 105.2      & 133.2       & \textbf{133.2}        & 133.2               &17.5        &34.2       & 40.3        & 60.8         \\ 
                                  & $|\mathcal{I}|$    & 63.2       & 79.6        & \textbf{79.6}        & 79.6                & 10.4       & 20.9        & 24.3        & 37.3         \\ 
                                  & Res               & 5.1e-4 & 8.2e-4 & 1.8e-4 & 6.3e-4         & 8.8e-4 & 6.5e-4 & 1.1e-3 & 9.8e-4 \\ \hline
\multirow{3}{*}{140}               & $n-\|x^*\|_0$       & 90.0        & \textbf{123.7}        & 112.2       & 106.8 & 30.3        & 39.7        & 58.6        &65.9         \\ 
                                  & $|\mathcal{I}|$             & 33.3        & 45.0       & 42.2       &      \textbf{48.2}              & 11.7        & 15.4        &21.3        & 24.5        \\ 
                                  & Res               & 4.3e-5    & 7.2e-5    & 2.6e-3    &1.0e-3    & 6.9e-9 & 5.8e-10     & 1.0e-9 & 9.1e-5 \\ \hline
\end{tabular}
\label{part1:table3}
\end{table}

\begin{figure}[H]
  \includegraphics[width=\linewidth]{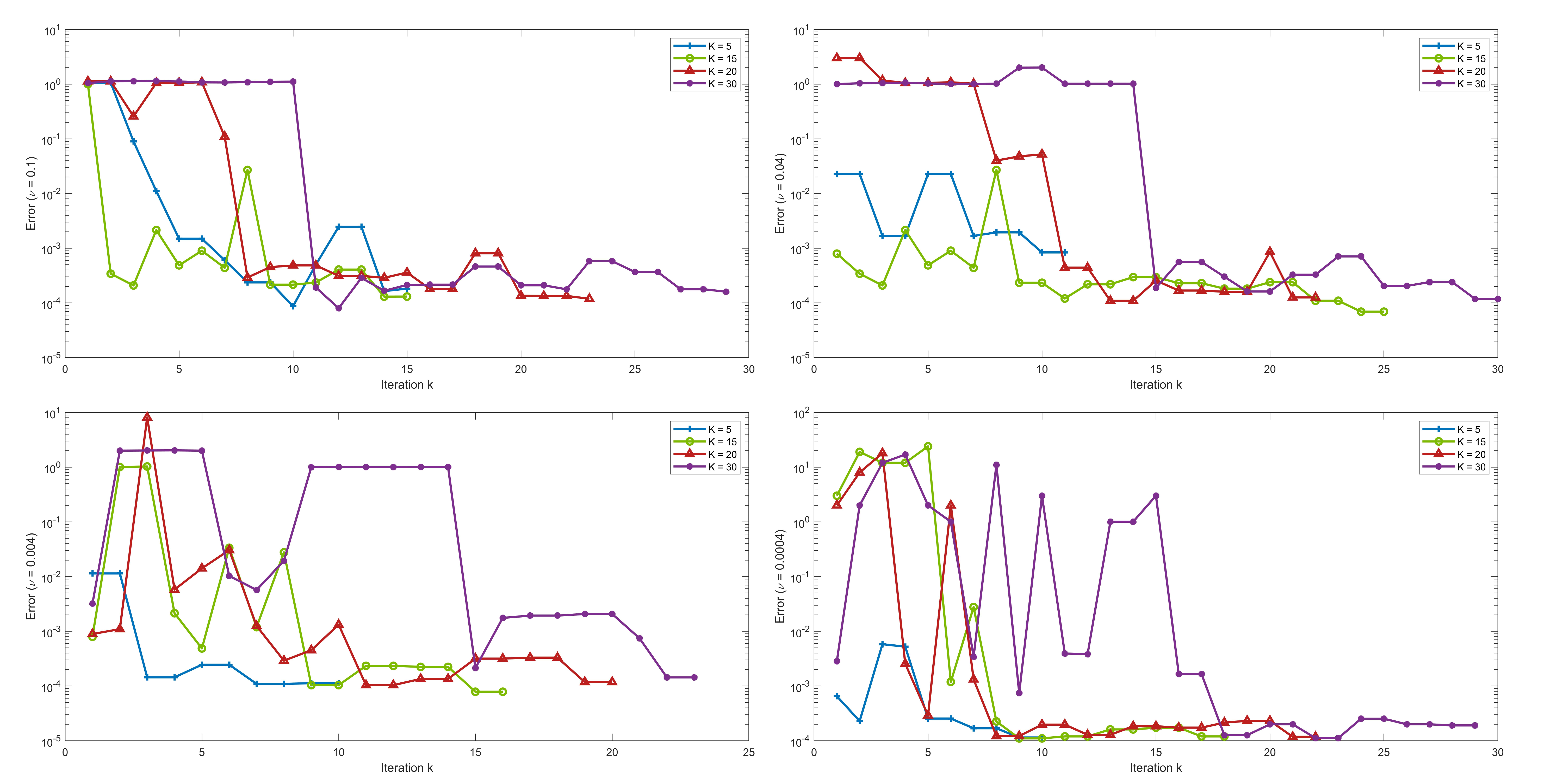}
  \caption{Numerical results with different $\nu$ and $K$ where $m=100$, $n=200$, nnz=100, $|\mathcal{I}_{00}(x^{true})|=20$ and $|\mathcal{I}_G(x^{true})|=|\mathcal{I}_H(x^{true})|=60$}
  \label{fig:2}
\end{figure}

Figure \ref{fig:2} shows the numerical results of the proposed algorithm with the updating scheme (\ref{updating}), where Error:=Res$+[\Phi(x^*)-\Phi(x^{true})]_+$.

\par From all numerical results above in Tables 1, 2 and 3,  it is obvious that our algorithm can always obtain more sparse solutions than the $L_p$-minimization method. On the other hand, in order to show the reliability of our algorithm, we consider different values of $|\mathcal{I}_G(x^{true})|$, $|\mathcal{I}_H(x^{true})|$, and $|\mathcal{I}_{00}(x^{true})|$ in randomly generated problems. See Appendix B. From our numerical experiments, we find that the numerical performance of our algorithm with the updating scheme (\ref{updating}) is much better than using a fixed parameter $\nu$.

\section{Conclusion}
\par In this paper, we define a continuous relaxation problem (\ref{Rv}) that has the same global minimizers and optimal value with problem (\ref{P0}) when $\nu$ is a sufficiently small positive number. We introduce MPCC lifted-stationarity of problem (\ref{Rv}) and establish the relationship among several necessary optimality conditions of problem (\ref{Rv}) in Theorem \ref{NecOptCon} based on different constraint qualifications. In Theorem \ref{Errorbound}, we provide an upper bound of the distance between a feasible point of problem (\ref{Rvsig}) and the feasible set of problem (\ref{P0}). Moreover, we propose an approximation method (Algorithm \ref{alg:A}) to solve problem (\ref{Rv}) and an AL method (Algorithm \ref{alg:B}) to solve its subproblem (\ref{Rvsig}). In Theorem \ref{AL}, we prove that Algorithm \ref{alg:B} converges to a lifted stationary point of subproblem (\ref{Rvsig}) under RCPLD condition and regularity. In Theorem \ref{algAMPCClifted}, we prove that Algorithm \ref{alg:A} converges to an MPCC lifted-stationary point of problem (\ref{Rv}) under MPCC-LICQ. To compare our algorithm with the $L_p$-minimization method for finding sparse solutions of complementarity problems, numerical results for problem (\ref{P0}) with VLCS constraints are presented, which show that our algorithm can find more sparse solutions than the $L_p$-minimization method.\\

\noindent\textbf{Appendix A}.
{\em Proof of Lemma \ref{EP0Rv}}:
 By Assumption \ref{assump1}, let $k>0$ be the global minimum of (\ref{P0}). For an integer $s$ with $0\leq s\leq n$, denote $Q_s:=\{x\in\mathbb{R}^n:\|x\|_{0}\leq s\}$ and ${\rm dist}(\mathcal{S},Q_s):=\inf\{{\rm dist}(x,Q_s):x\in \mathcal{S}\}$. Then the set $\mathcal{S}$ does not have a vector $x$ such that $\|x\|_0<k$, which means ${\rm dist}(\mathcal{S},Q_{k-K})>0$ for all $K=1,\ldots,k$. Define
\begin{equation}\label{2.1}
  \bar{\nu}=\min\left\{\frac{1}{K}{\rm dist}(\mathcal{S},Q_{k-K}):K=1,\ldots,k\right\}.
\end{equation}
  \par (i) Firstly, we prove that a global minimizer $x^*$ of problem (\ref{P0}) is also a global minimizer of (\ref{Rv}) for any $0<\nu<\bar{\nu}$. Since the global optimality of (\ref{P0}) yields $\|x\|_0\geq k$ for any $x\in \mathcal{S}$, we prove the conclusion by two cases.
  \par Case 1. $\|x\|_0=k$. Then, for any $i\in \Gamma(x)$,
  \begin{equation*}
    |x_i|\geq\min\{|x_j|>0:j=1,\ldots,n\}={\rm dist}(x,Q_{k-1})\geq {\rm dist}(\mathcal{S},Q_{k-1})\geq\bar{\nu},
  \end{equation*}
  where the last inequality comes from (\ref{2.1}). Due to $0<\nu<\bar{\nu}$, we obtain $|x_i|>\nu$ for all $i\in\Gamma(x)$, which means that $\Phi(x)=k=\Phi(x^*)$.
  \par Case 2. $\|x\|_0=r>k$. If $|\Gamma_2(x)|=r'\geq k$, from $\phi(t)>0$ for $t>0$ and $\phi(t)=0$ for $t=0$, we have
  \begin{equation*}
    \Phi(x)=\sum_{i\in\Gamma_2(x)}\phi(x_i)+\sum_{i\in\Gamma_1(x)}\phi(x_i)\geq k+\sum_{i\in\Gamma_1(x)}\phi(x_i)>k.
  \end{equation*}
  Now assume $r'<k$, and without loss of generality, assume $|x_1|,\cdots,|x_{r-r'}|\in(0,\nu)$. Since $r'<k$, we know from (\ref{2.1}) that $\frac{1}{k-r'}{\rm dist}(\mathcal{S},Q_{r'})\geq\bar{\nu}$. Together with
  \begin{equation*}
    |x_1|+\cdots+|x_{r-r'}|\geq\sqrt{x_1^2+\cdots+x_{r-r'}^2}\geq {\rm dist}(x,Q_{r'})\geq {\rm dist}(\mathcal{S},Q_{r'}),
  \end{equation*}
  we obtain
  \begin{equation*}
       \Phi(x)=\frac{1}{\nu}(|x_1|+\cdots+|x_{r-r'}|)+r'\geq\frac{1}{\nu}{\rm dist}(\mathcal{S},Q_{r'})+r'\geq\frac{1}{\nu}(k-r')\bar{\nu}+r'>k,
  \end{equation*}
  where the last inequality is obtained by $0<\nu<\bar{\nu}$. The above two cases imply that $\Phi(x)\geq k=\Phi(x^*)$ for all $x\in \mathcal{S}$. Thus, $x^*$ is also a global minimizer of (\ref{Rv}). Moreover, we know $\|x^*\|_0=\Phi(x^*)$ for each minimizer $x^*$ of (\ref{Rv}).
  \par (ii) Next we prove that a global minimizer $x^*$ of problem (\ref{Rv}) with $0<\nu<\bar{\nu}$ is a global minimizer of problem (\ref{P0}). Assume on the contrary $x^*$ is not a solution of (\ref{P0}). Let $\tilde{x}$ be a global minimizer of (\ref{P0}), which means $\|\tilde{x}\|_0=k$. Since $\phi(t)\leq|t|^0$, we have $\Phi(\tilde{x})\leq\|\tilde{x}\|_0$. By similar ways in the proof for Case 2 in (i), we could obtain $\Phi(x^*)>k=\|\tilde{x}\|_0\geq\Phi(\tilde{x})$ for any $0<\nu<\bar{\nu}$. This contradicts the global optimality of $x^*$ for (\ref{Rv}). Thus, $x^*$ is a global minimizer of (\ref{P0}).
  \par Therefore, when $0<\nu<\bar{\nu}$, (\ref{P0}) and (\ref{Rv}) have the same global minimizers and optimal values.\qed

~\\
\textbf{Appendix B}. Numerical results for random generated problem with different $|\mathcal{I}_G(x^{true})|$, $|\mathcal{I}_H(x^{true})|$ and $|\mathcal{I}_{00}(x^{true})|$:
\begin{table}[H]
\footnotesize
\centering
\caption{Comparison of our algorithm and the $L_p$-minimization method with $m=50$, $n=100$, $|\mathcal{I}_{00}(x^{true})|=30$ and $|\mathcal{I}_G(x^{true})|=|\mathcal{I}_H(x^{true})|=40$}
\begin{tabular}{c||c||c|c|c|c||c|c|c|c}
\hline
\multirow{2}{*}{nnz} & \multirow{2}{*}{} & \multicolumn{4}{c||}{$\nu(K=20)$}                             & \multicolumn{4}{c}{$p$}                         \\ \cline{3-10}
                                  &                   & 0.0004    & 0.004     & 0.04      & 0.1                & 0.1       & 0.3       & 0.5       & 1.0        \\ \hline
\multirow{3}{*}{30}               & $n-\|x^*\|_0$       & 64.0        & 63.8       & 62.0       & \textbf{65.0} & 27.1        & 27.6        & 32        & 37.8         \\ 
                                  & $|\mathcal{I}|$             & 54.4        & 55.0        & 51.8        &      \textbf{55.4}         & 22.8       & 23.6       & 27.2       & 31.9        \\ 
                                  & Res               & 3.9e-4 & 5.4e-4    &5.5e-4    &1.6e-4                  & 8.2e-5 & 1.9e-4 & 2.3e-4 & 5.6e-6  \\ \hline
\multirow{3}{*}{50}               & $n-\|x^*\|_0$       & 62.8        & \textbf{63.0}       &62.8       & 61.7                & 25.3        & 26.4        & 28.1       & 31.6        \\ 
                                  & $|\mathcal{I}|$             & 36.6        & 36.5        & 36.6        & \textbf{36.7}                & 17.4        & 17.9      & 19.1        & 21         \\ 
                                  & Res               & 8.1e-5 & 4.6e-4 & 2.1e-4 & 8.9e-5          & 9.3e-5 & 1.9e-5 & 8.9e-5 & 1.2e-4 \\ \hline
\multirow{3}{*}{70}               & $n-\|x^*\|_0$       & 50.4        & 60.5        & \textbf{63.5}        & 62.0 & 27.2        & 35.5       & 35.3        & 39.9         \\ 
                                  & $|\mathcal{I}|$             & 21.2        & 24.9       & \textbf{26.0}        &      25.1              &11.3        & 13.6       & 14.3        & 16         \\ 
                                  & Res               & 1.5e-4    & 1.3e-4    & 8.2e-4    &2.4e-4    & 1.1e-4 & 1.0e-4     & 1.9e-5 & 3.3e-5 \\ \hline
\end{tabular}
\label{part1:table4}
\end{table}

\begin{table}[H]
\footnotesize
\centering
\caption{Comparison of our algorithm and the $L_p$-minimization method with $m=100$, $n=200$, $|\mathcal{I}_{00}(x^{true})|=60$ and $|\mathcal{I}_G(x^{true})|=|\mathcal{I}_H(x^{true})|=80$}
\begin{tabular}{c||c||c|c|c|c||c|c|c|c}
\hline
\multirow{2}{*}{nnz} & \multirow{2}{*}{} & \multicolumn{4}{c||}{$\nu(K=20)$}                             & \multicolumn{4}{c}{$p$}                         \\ \cline{3-10}
                                  &                   & 0.0004    & 0.004     & 0.04      & 0.1                & 0.1       & 0.3       & 0.5       & 1.0        \\ \hline
\multirow{3}{*}{60}               & $n-\|x^*\|_0$       & 80.4     & \textbf{125.8}       &122.8       & 125.8 & 44.7        & 62        & 58.7        & 71.8         \\ 
                                  & $|\mathcal{I}|$             & 69.2      &  \textbf{107.4}      & 106.5        &      107.4              & 37.6       & 52.6        & 49.2        & 60.5         \\ 
                                  & Res               & 2.1e-3 & 8.9e-4    & 9.5e-4    &2.2e-3                 & 1.3e-5 & 8.0e-6 & 1.6e-4 & 9.5e-5  \\ \hline
\multirow{3}{*}{100}               & $n-\|x^*\|_0$       &83.5     & 114.3        & \textbf{120.2}       & 111.3                &35        &30.8      & 49.7        & 52.7         \\ 
                                  & $|\mathcal{I}|$    & 50.3       & 69.0       & \textbf{69.2}        & 67.2                & 23.0       & 20.8        & 30.7       & 32         \\ 
                                  & Res               & 3.5e-4 & 5.1e-4 & 9.5e-4 & 7.0e-4        & 1.7e-4 & 1.4e-7 & 4.4e-6 & 6.5e-9 \\ \hline
\multirow{3}{*}{140}               & $n-\|x^*\|_0$       & 100.8       & 116.7        & \textbf{119.8}       & 114.4 & 27.5        & 24.7        & 67.0       &51.5        \\ 
                                  & $|\mathcal{I}|$             & 40.2        & 46.2       & \textbf{48.6}        &      46.6              & 9.3        & 9.2        &\textbf{26.8}        & 20.2       \\ 
                                  & Res               &7.2e-4    & 7.8e-4    & 8.1e-4    &4.5e-4   & 1.3e-8 & 2.0e-5     & 1.4e-8 & 2.0e-7 \\ \hline
\end{tabular}
\label{part1:table5}
\end{table}

\begin{acknowledgements}
The authors would like to thank Prof. William Hager, the coordinating editor  and two referees for their helpful comments.
\end{acknowledgements}

\noindent
{\bf Funding } This work is supported in part by Hong Kong Research Grant Council PolyU15300120.\\
{\bf Data availability} The data that support the findings of this
study are available from the corresponding author upon request.\\
{\bf Conflict of interest}

 The authors declare that they have no conflict of interest.

\end{document}